\documentclass[11pt]{article}

\usepackage[a4paper,margin=1.02in]{geometry}
\usepackage[T1]{fontenc}
\usepackage{lmodern}
\usepackage{amsmath,amssymb,amsthm,mathtools}
\numberwithin{equation}{section}
\usepackage{microtype}
\usepackage{cite}
\usepackage{xcolor}
\usepackage[hidelinks]{hyperref}

\newtheorem{theorem}{Theorem}[section]
\newtheorem{lemma}[theorem]{Lemma}
\newtheorem{proposition}[theorem]{Proposition}

\newtheorem{claim}[theorem]{Claim}
\newtheorem{corollary}[theorem]{Corollary}
\newtheorem{definition}[theorem]{Definition}
\newtheorem{observation}[theorem]{Observation}

\newtheorem{conjecture}[theorem]{Conjecture}

\newcommand{\F}{\mathbb F}
\newcommand{\Gr}{\operatorname{Gr}}
\newcommand{\supp}{\operatorname{supp}}

\title{The Structure of Cycles in Projective Geometry over $\F_q$}
\author{
Ran J. Tessler\thanks{Weizmann Institute. Email:
\href{mailto:ran.tessler@weizmann.ac.il}{ran.tessler@weizmann.ac.il}.
Supported by the ISF (grant No.~1729/23).}
\and
Elad Tzalik\thanks{Weizmann Institute. Email:
\href{mailto:elad.tzalik@weizmann.ac.il}{elad.tzalik@weizmann.ac.il}.
Supported by the Adams Fellowship Program of
the Israel Academy of Sciences and Humanities.}
}
\date{}

\begin{document}
\maketitle
\vspace{-1.2em}

\begin{abstract}

A classical geometric result says that every nonzero cycle of the mod-$2$ incidence map from $d$-subsets to $(d-1)$-subsets of $[n]$ has support at least $d+1$, with equality attained by the boundary of a simplex on $d+1$ vertices. We prove an analogous result for the subspace lattice of $\F_q^n$, determining the minimum support size of a nonzero $d$-cycle over a field $K$ of characteristic $p \mid q+1$.
Surprisingly, the boundary of a $(d+1)$-space is not always optimal. Shorter cycles occur for $d=1$, and for $d=2$ when $n\ge4$, and otherwise, the boundary of a $(d+1)$-space is shortest. For $d\ge4$, we prove a gap-stability result: every cycle with support less than $(2-10/q)$ times the minimum is a multiple of the boundary of a $d+1$ space.

We also construct support-controlled cones, yielding a direct geometric analysis of the dimensions in which the homology groups of the subspace incidence complex vanish and explicit lower bounds on its coboundary expansion. The degree-$1$ expansion estimate is an ingredient in the stability theorem.
\end{abstract}

\section{Introduction}

A classical theme in combinatorics is the passage from subsets of a finite set to subspaces of a finite vector space. It replaces the Boolean lattice and binomial coefficients by the subspace lattice and Gaussian binomial coefficients. Landmark results include the vector-space Ramsey theorem of Graham, Leeb, and Rothschild \cite{GrahamLeebRothschild1972}, Hsieh's sharp vector-space analogue of the Erd\H{o}s--Ko--Rado theorem \cite{Hsieh1975}, the construction of the first nontrivial $q$-Steiner systems $\mathcal{S}_2(2,3,13)$ by Braun, Etzion, {\"O}sterg{\aa}rd, Vardy, and Wassermann \cite{BraunEtAl2016}, and, more recently, the general existence theorem of Keevash, Sah, and Sawhney for subspace designs in sufficiently large ambient dimension \cite{KeevashSahSawhney2025}.

The basic object in this paper is $\partial_d$, the $0$--$1$ incidence matrix between $d$-spaces and $(d-1)$-spaces.
When regarded over a field $K$ satisfying $(q+1)1_K=0$\footnote{This is equivalent to requiring $char(K) \mid q+1$, e.g. $K=\mathbb{F}_2$ and $q$ odd.},  $\partial_{d-1}\partial_d=0$ for all $d$. The resulting square-zero projective incidence complex was studied by Fisk and by Mnukhin and Siemons \cite{Fisk1997,MnukhinSiemons2000}. Using the rank formula of subspace incidence maps by Frumkin and Yakir \cite{FrumkinYakir1990}, Mnukhin and Siemons determined its homology \cite{MnukhinSiemons2000} \footnote{They considered a more general case where instead of having $\partial^2=0$, $\partial$ is just nilpotent, and studied when the homology vanishes. Consequently, Corollary 3.2 and Theorem 3.7 in the present paper are alternative proof of their result in the $\partial^2=0$ setup.}. These results determine dimensions of kernels and possible homology groups, but not the support of individual kernel vectors.

We write $Z_d(V;K)=\ker\partial_d$ and call its elements $d$-cycles. Their \emph{cycle girth} is
\[
    \gamma_d(V;K)
    :=
    \min\bigl\{
        |\operatorname{supp}(\alpha)|:
        0\neq\alpha\in Z_d(V;K)
    \bigr\}.
\]
The prototypical comparison is the Boolean lattice equipped with the mod-$2$ incidence map from $d$-sets to $(d-1)$-sets. Its cycle girth is $d+1$, attained by the boundary of a simplex on $d+1$ vertices---that is, by the collection of all $d$-sets contained in some fixed $(d+1)$-set. For the $q$-analogues, over $\mathbb R$, Cho determined the minimum support of adjacent-rank null designs and classified the extremizers \cite{Cho1998,Cho1999}. His work gives a $q$-analog for the work of Frankl and Pach \cite{FranklPach1983NullDesign}; Krotov extended the corresponding minimum-volume theorem to subspace trades between non-adjacent ranks \cite{Krotov2017}. The work of Krotov \cite{Krotov2020NullDesigns} studied the structure of cycles in the case where $q 1_K = 0$, which exhibit and entirely different structure and results, since successive applications of $\partial$ are never zero.

\subsection{Our results}

As a main result, we completely characterize $\gamma_d(\F_q^n;K)$ for all $d,q,n$. Surprisingly, the boundary of a single subspace does not always achieve the minimum. For $a\ge1$, let $[a]_q=1+q+\cdots+q^{a-1}$, and set $[0]_q=0$.

\begin{theorem}[Cycle girth]\label{thm:minimum-support}
Let $q$ be a prime power, let $K$ be a field with $(q+1)1_K=0$, and let $1\le d<n$. Then
\[
\gamma_d(\F_q^n;K)
=
\begin{cases}
2, & d=1,\\
[3]_q, & d=2,\ n=3,\\
2(q+1), & d=2,\ n\ge4,\\
[d+1]_q, & d\ge3.
\end{cases}
\]
For $d\ge3$, the boundary $\partial_{d+1}[E]$ of every $(d+1)$-space $E\le\F_q^n$ attains the minimum.
\end{theorem}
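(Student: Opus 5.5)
Each case splits into a construction (upper bound) and a lower bound, handled dimension by dimension.

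\textbf{Upper bounds.} For $d=1$, take two distinct lines $\ell_1,\ell_2$ and the chain $[\ell_1]-[\ell_2]$. Then $\partial_1$ sends each line to the zero space with coefficient $1$, so $\partial_1([\ell_1]-[\ell_2])=0$, and the support has size $2$.

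For general $d$, the boundary $\partial_{d+1}[E]$ of a $(d+1)$-space $E$ is a cycle, since $\partial_d\partial_{d+1}=0$. It has support equal to the set of hyperplanes of $E$, of size $[d+1]_q$. This handles $d\ge3$ and the case $d=2$, $n=3$.

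For $d=2$, $n\ge4$, I build a cycle on $2(q+1)$ planes. Fix a $4$-space $W$ and two skew lines $L,M\le W$. Take the $q+1$ planes $\langle L,p\rangle$ for points $p\in M$, and the $q+1$ planes $\langle M,p'\rangle$ for points $p'\in L$, with coefficients $+1$ and $-1$ respectively. Then check that each line appears with coefficient $0$ in the boundary. The line $L$ lies in $q+1$ planes of the first family, giving $(q+1)\cdot 1=0$; likewise $M$. Any other line $\langle a,b\rangle$ with $a\in L$, $b\in M$ lies in exactly one plane of each family, giving $1-1=0$. Every remaining line contained in a plane $\langle L,p\rangle$ meets $L$ in a point and passes through $p\in M$, so it is of the form $\langle a,b\rangle$. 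So this is a cycle. I would double-check this incidence count carefully, possibly adjusting the signs to a twisted version if some lines are hit twice.

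\textbf{Lower bounds.} For $d=1$, a single line $\ell$ has $\partial_1[\ell]=[0]\neq0$, so every nonzero cycle has support at least $2$.

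For $d\ge2$, I use induction via links. Let $\alpha$ be a nonzero $d$-cycle and fix a point $x$ lying in some subspace of its support. Consider the link of $x$: the $d$-spaces in $\supp\alpha$ that contain $x$, projected to $V/x$, form a chain $\alpha_x$ of $(d-1)$-spaces in $V/x$. The identity $\partial\alpha=0$ restricted to $(d-1)$-spaces containing $x$ shows that $\alpha_x$ is a $(d-1)$-cycle. It is nonzero exactly when $x$ lies in some subspace of the support.

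Now double count. Let $N=\sum_{U\in\supp\alpha}[d]_q$ be the number of pairs (point $x$, support element containing $x$). Then
\[
N=\sum_x |\supp\alpha_x|\ge |\{x\text{ covered}\}|\cdot \gamma_{d-1}.
\]
The covered points are all points of $\bigcup\supp\alpha$, and $\bigcup\supp\alpha$ must span at least a $(d+1)$-space. Otherwise the support is a single $d$-space $U$ and $\partial[U]\ne0$. This gives
\[
|\supp\alpha|\ge \frac{[d+1]_q\,\gamma_{d-1}}{[d]_q}.
\]
For $d=2$ this yields $|\supp\alpha|\ge 2[3]_q/(q+1)$, which is too weak. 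So I would instead count more carefully, using that the union of the support planes covers at least $[3]_q$ points and that each link cycle is supported on at least $2$ lines. If the support spans only a $3$-space, the cycles in $\F_q^3$ are analyzed directly: the link at each point is a multiple of a difference of points in a plane, which forces every plane through a covered line, giving $[3]_q$. If it spans $\ge4$ dimensions, a covering argument should give $2(q+1)$.

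For $d\ge3$ I would try the same averaging with the sharper inductive bound. \textbf{The main obstacle} is that the bound through $d=2$ loses a factor: $\gamma_2=2(q+1)<[3]_q$ breaks the clean induction to $[d+1]_q$. I would handle it by proving a stronger statement at $d=3$: links at most points are not of the short $2(q+1)$ type, and a short link forces a structural configuration that propagates to neighbouring points. This presumably uses the paper's stability and cone constructions. It is where I expect the bulk of the work to lie.
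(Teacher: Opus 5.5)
\paragraph{Main gap: the degree-$3$ lower bound.} You leave the degree-$3$ lower bound open, but the whole induction for $d\ge3$ starts from it. Plain averaging of link sizes cannot reach $[4]_q$. The links are $2$-cycles, which may have only $2(q+1)$ elements. So whether you sum over covered points or over the points of a fixed $U_0\in\supp\alpha$, you get only $|\alpha|\gtrsim 2q^2\ll[4]_q$.

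Your proposed rescue through stability is circular. Theorem~\ref{thm:gap-stability} is only stated for $d\ge4$, and its proof uses the girth theorem itself (for $\delta\ge0$, inside Corollary~\ref{cor:quotient-link-defect}, and at the last step).

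The missing idea is a \emph{relative} lower bound for $2$-cycles that records how the support meets a fixed line. Let $\beta\in Z_2(V)$ and $U_0\in\supp\beta$. Let $\mathcal X$ be the other support lines meeting $U_0$, and $\mathcal Y$ the support lines skew to $U_0$. Then $|\mathcal X|+(q+1)|\mathcal Y|\ge q(q+1)$ (Lemma~\ref{lem:relative-two-cycle}, proved by summing cycle equations along parallel classes of affine lines in suitably chosen planes through $U_0$).

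Apply this to the link $\alpha_\ell$ at each point $\ell\le U_0$ of a fixed $U_0\in\supp\alpha$. A member of $\supp\alpha$ meeting $U_0$ in a $2$-space lands in $\mathcal X$ for $q+1$ choices of $\ell$. A member meeting $U_0$ in a single point lands in $\mathcal Y$ for exactly one $\ell$. The weight $q+1$ compensates exactly. Let $m_i$ count the members of $\supp\alpha\setminus\{U_0\}$ meeting $U_0$ in dimension $i$. Summing gives $(q+1)(m_1+m_2)\ge q(q+1)[3]_q$, hence $|\alpha|\ge1+q[3]_q=[4]_q$.

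\paragraph{The step for $d\ge4$.} Your claim that at least $[d+1]_q$ points are covered does not follow from the support spanning a $(d+1)$-space. For example, two adjacent $d$-spaces span one but cover only $[d]_q+q^{d-1}$ points. Localize instead. Sum $|\alpha_\ell|$ only over the points of a fixed $U_0$, and use $\dim(U\cap U_0)\le d-1$ for $U\ne U_0$. This gives $[d]_q\gamma_{d-1}\le[d]_q+(|\alpha|-1)[d-1]_q$, hence $|\alpha|\ge[d+1]_q$ once $\gamma_{d-1}=[d]_q$.

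\paragraph{Degree $2$.} Your construction for $n\ge4$ is in the wrong degree. The spaces $\langle L,p\rangle$ are $3$-dimensional, so you have written a $3$-chain, not an element of $Z_2$. It is not even a cycle. Each $\langle L,p\rangle$ contains $q^2-1$ lines that avoid $p$ and differ from $L$. Each of these lies in no other member of either family: distinct planes of the first family meet in $L$, and $\langle L,p\rangle\cap\langle M,p'\rangle=\langle p,p'\rangle$. So each gets coefficient $1$ in the boundary.

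What works is the pair of rulings of the hyperbolic quadric $x_1x_4=x_2x_3$ in $\F_q^4$, namely $\sum_{\lambda}\langle e_1+\lambda e_2,e_3+\lambda e_4\rangle-\sum_{\lambda}\langle e_1+\lambda e_3,e_2+\lambda e_4\rangle$ with $\lambda\in\F_q\cup\{\infty\}$. Every point of the quadric lies on exactly one line of each ruling.

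The matching lower bound for $n\ge4$ is not argued at all. The paper proceeds as follows.
\begin{itemize}
\item Take a $3$-space $H$ minimizing $f=|\supp\alpha\cap\Gr_2(H)|$ subject to $f\ge2$.
\item If $f\ge q+2$, find two such $3$-spaces sharing a support line, so $|\alpha|\ge 2f-1$.
\item If $f\le q+1$, count support lines of $H$ through each point to show $|\partial\alpha_H|\ge f(q+2-f)$, where $\alpha_H$ is the restriction of $\alpha$ to $\Gr_2(H)$. Each point of $\supp\partial\alpha_H$ then needs its own support line outside $H$, so $|\alpha|\ge f+f(q+2-f)\ge2(q+1)$.
\end{itemize}

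For $n=3$ your sketch is garbled. Summing the cycle equations over the points of each line shows that all coefficients are equal, so $\alpha$ is a multiple of $\partial_3[V]$.
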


In degree $1$, the difference of two distinct $1$-spaces (i.e. the difference of two projective points) is an exceptional cycle. In degree $2$, an appropriately signed sum of isotropic spaces of a $4$-dimensional orthogonal space is a cycle of support $2(q+1)$. For more details, see e.g. \cite{James1984}.

For $d\ge4$, we prove a \emph{gap-stability} theorem, showing that every cycle which is not a boundary of a single $(d+1)$-space must be twice larger than the minimizer (up to low order terms).

\begin{theorem}[Gap stability]\label{thm:gap-stability}
Let $K$ be a field with $(q+1)1_K=0$, let $d\ge4$, and let
$n\ge d+1$. If
\[
0\ne\alpha\in Z_d(\F_q^n;K),
\qquad
|\alpha|<
\left(2-\frac{10}{q}\right)
\gamma_d(\F_q^n;K),
\]
then $\alpha=c\,\partial_{d+1}[E]$ for some $(d+1)$-dimensional
$E\le\F_q^n$ and $c\in K^\times$.
\end{theorem}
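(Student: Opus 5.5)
We argue by induction on $d$, with $d=3$ serving as the base input through Theorem~\ref{thm:minimum-support}. The main tool is a link (restriction) operation. For a $1$-space $P$, define $\alpha_P$ as the $(d-1)$-chain on $V/P$ obtained by taking the $d$-spaces of $\supp\alpha$ containing $P$ and passing to the quotient. One checks from $\partial_{d-1}\partial_d=0$ and the local structure of the incidence map that $\alpha_P\in Z_{d-1}(V/P;K)$. Double counting gives
\[
\sum_P|\alpha_P|=[d]_q|\alpha|,
\]
so the average link size is $[d]_q|\alpha|/[n]_q$. More usefully, restrict to the points $P$ lying in the union of the supports. For a $1$-space $P$ covered by $\supp\alpha$, the link $\alpha_P$ is nonzero, hence $|\alpha_P|\ge\gamma_{d-1}=[d]_q$ when $d-1\ge3$. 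Since $d\ge4$, the link is always in the regime of Theorem~\ref{thm:minimum-support}.

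\textbf{Step 1 (few covered points).} Let $N$ be the number of covered points. Then
\[
N[d]_q\le\sum_P|\alpha_P|=[d]_q|\alpha|,
\qquad\text{so } N\le|\alpha|<(2-10/q)[d+1]_q .
\]
Next we classify the links. Call $P$ \emph{good} if $|\alpha_P|<(2-10/q)[d]_q$. An averaging argument shows that most covered points are good. By the induction hypothesis (for $d-1\ge4$), or by a direct analysis when $d-1=3$, each good link is $c_P\,\partial[E_P/P]$ for a $d$-space $E_P\ni P$. Thus near a good point, $\alpha$ looks like the boundary of a single $(d+1)$-space $E_P$ with coefficient $c_P$. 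The case $d=4$ needs a separate $d-1=3$ stability statement. I expect to derive it in the same way from the degree-$2$ classification, or to replace it by a weaker ``most links are near-simplices'' statement.

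\textbf{Step 2 (consistency).} If $P,P'$ are good and some $d$-space $D\in\supp\alpha$ contains both, then $D\subset E_P\cap E_{P'}$. If moreover $E_P\neq E_{P'}$, then $E_P\cap E_{P'}=D$ and $P'$ sees only a few $d$-spaces through $P$. Counting the $d$-spaces of $\partial[E_P]$ through $P$ shows that the good points are mostly in a single $E$ with a common coefficient $c$.

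\textbf{Step 3 (conclusion).} Let $\beta=\alpha-c\,\partial_{d+1}[E]$. This is a cycle, and $\beta$ vanishes near every good point of $E$. Hence
\[
|\beta|\le|\alpha|-|\partial E|+(\text{small})<(1-10/q)[d+1]_q+(\text{small}),
\]
which is below $\gamma_d$. So $\beta=0$.

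\textbf{Main obstacle.} I expect Steps 1 and 2 to be the hardest: turning the averaging into ``most points are good, and their $E_P$ agree'' with constants losing only $10/q$. For this I would first invoke the paper's degree-$1$ coboundary expansion bound applied to the function $P\mapsto(E_P,c_P)$ on the graph of good points. The expansion should force this function to be nearly constant. The low-order counting losses then absorb into the $10/q$ term.
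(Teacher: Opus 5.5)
Your outline has genuine gaps: the induction has no base case, and the averaging in Step 1 does not produce the good points that Steps 2--3 need. For $d=4$ the links are $3$-cycles, so you need a degree-$3$ stability theorem. You do not prove one, and none is available. It cannot be obtained ``in the same way'' one level down. The engine of your argument is the bound $|\alpha_P|\ge[d]_q$ for every nonzero link, i.e.\ Theorem~\ref{thm:minimum-support} in degree $d-1\ge3$. One level down that bound fails, since there are $2$-cycles of size $2(q+1)$, far below $[3]_q$. This is exactly why the statement begins at $d=4$.

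Independently, from $|\alpha_P|\ge[d]_q$ and $\sum_P|\alpha_P|=[d]_q|\alpha|$, Markov's inequality only bounds the number of bad covered points by $(|\alpha|-N)/(1-10/q)$. Nothing you say rules out $N$ close to $[d+1]_q$ while $|\alpha|$ is close to $(2-10/q)[d+1]_q$. In that case the average link size is essentially at the threshold $(2-10/q)[d]_q$, and the bound allows essentially every covered point to be bad. So the induction hypothesis, with the same constant, cannot be applied to most links. Step 3's ``small'' term (the hyperplanes of $E$ containing no good point) is likewise uncontrolled.

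Step 2 also misuses the expansion bound: $h_{1,m}$ concerns $K$-valued cochains on the subspace complex, not a function $P\mapsto(E_P,c_P)$ into $\Gr_{d+1}(V)\times K^\times$. In fact consistency is exact and free. If good points $P\ne P'$ lie on a common supported $d$-space, then the $[d-1]_q\ge2$ hyperplanes of $E_P$ through $P+P'$ are all supported and all contain $P'$. This forces $E_P=E_{P'}$ and $c_P=c_{P'}$, so the whole difficulty is producing enough good points.

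The paper sidesteps induction on $d$ and stability of links entirely:
\begin{enumerate}
\item The quotient-link defect identity at each supported $U_0$ shows that every supported $d$-space has at most $(1+2/q)\delta$ supported non-neighbours in the Grassmann graph, where $\delta=|\alpha|-[d+1]_q$.
\item A double count makes every star sparse.
\item Two adjacent supported spaces $U_0,U_1$ then give a top $\Gr_d(E)$, with $E=U_0+U_1$, carrying more than $6[d+1]_q/q$ supported elements.
\item The bound on $h_{1,d+1}$ is applied, through orthogonal duality on $C_d(E)$, to correct the part of $\alpha$ inside $E$ to some $c\,\partial[E]$ with $c\ne0$.
\item Finally $|\alpha-c\,\partial[E]|<[d+1]_q$, which forces $\alpha=c\,\partial[E]$.
\end{enumerate}
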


We also develop a support-controlled cone construction for the modular incidence complex. Cone constructions, introduced in the seminal works of Linial and Meshulam, Gromov, and Meshulam and Wallach \cite{LinialMeshulam2006,MeshulamWallach2009,Gromov2010} give explicit fillings of cycles with quantitative support bounds, which implies coboundary expansion. They are known to imply coboundary expansion for chain complexes with enough symmetry, see \cite{LubotzkyMeshulamMozes2016}. Due to the action of the general linear group on the grassmannian, our cone bounds give explicit lower bounds on coboundary expansion, which are an ingredient in the proof of gap stability.

The same cone identity gives a new elementary proof that the incidence complex is exact in odd ambient dimension and that, in even ambient dimension, its homology vanishes outside the middle degree; see Corollary~\ref{cor:odd-dimensional-exactness} and Theorem~\ref{thm:homology-away-from-middle}.

\subsubsection*{Further directions}

Theorem~\ref{thm:gap-stability} approaches, but does not determine, the natural two-boundary threshold. Concretely, we ask: Is it the case that for $d>2$, every $0\ne\alpha\in Z_d(\F_q^n;K)$ with
\[
    |\alpha|<2\gamma_d(\F_q^n;K)-1
\]
is of the form $\alpha=\partial\beta$ for some $\beta$ supported on at most two $(d+1)$-spaces? 

Another direction is motivated by the homological consequences of the cone construction, which suggest a second extremal problem. Corollary~\ref{cor:odd-dimensional-exactness} and Theorem~\ref{thm:homology-away-from-middle} show that $H_d(\F_q^n;K)=0$ if and only if $n \neq 2d$. This motivates studying the middle-dimensional systole
\[
    \operatorname{sys}_d(\F_q^{2d};K)
    :=
    \min\bigl\{
        |\operatorname{supp}(\alpha)|:
        \alpha\in Z_d(\F_q^{2d};K)
        \setminus\operatorname{im}\partial_{d+1}
    \bigr\}.
\]
The basic question, analogous to Theorem~\ref{thm:minimum-support}, is to determine this quantity. We conjecture:

\begin{conjecture}[Middle-dimensional systole]
\label{conj:middle-dimensional-systole}
For every $d\ge1$, every prime power $q$, and every field $K$ satisfying $(q+1)1_K=0$,
\[
    \operatorname{sys}_d(\F_q^{2d};K)
    =
    \prod_{i=0}^{d-1}(1+q^i).
\]
\end{conjecture}

We believe that the chain formed by maximally isotropic spaces of an orthogonal form, appearing in the work of James  \cite{James1984} achieves the minimum (one can also show it is not in $\operatorname{im} \partial_{d+1}$).

\paragraph{Organization of the paper.}
The preliminaries below give notation for the projective incidence complex. Section~2 proves Theorem~\ref{thm:minimum-support}, including the exceptional low-dimensional cases. Section~\ref{sec:cones-and-expansion} develops the cone construction, derives its homological and coboundary-expansion consequences, and proves Theorem~\ref{thm:gap-stability}. Section 4 examines the middle degree, where our cone construction stops: it shows that the associated fillings we construct are of optimal size in one dimension higher.

\paragraph{AI disclosure.}

Lemma 2.3 was formulated and proved by GPT 5.6 Sol, and is the key to the proof of Theorem~\ref{thm:minimum-support}. Following this proof, the authors and GPT 5.6 Sol developed the stability version, Theorem~\ref{thm:gap-stability}. The authors have used chatGPT and Codex to assist with writing the paper, and take full responsibility for its mathematical content. Except the above, the rest of the results, and the proof strategies appearing in the paper were found by the authors, with most of them appearing in the Master's Thesis of the second author.

\paragraph{Acknowledgment.}

We thank Jonathan Zin for his generous help with many computer simulations and fruitful discussions, which led to establishing  connections with James's work. The second author thanks Itai Benjamini for various encouragements, and for many insightful conversations. A previous version of this paper (arXiv:2306.14317) has been withdrawn \footnote{It contained an erroneous proof of a conjecture of Mnukhin and Siemons on $ker \partial_d$ over general coefficient rings $R$. Even though by now we have a proof of that conjecture when $q$ is invertible in $R$, we wanted to keep the paper focused on the extremal-combinatorics nature of our research. For the same reason, some previous results on topological overlap/ random $q$-complexes are omitted. }.

\subsection{Preliminaries}

Throughout, $q$ is a prime power and $K$ is a field satisfying
$(q+1)1_K=0$. Unless stated otherwise, $V$ denotes an
$n$-dimensional $\F_q$-space. For any finite-dimensional
$\F_q$-space $X$, let $\Gr_d(X)$ denote the set of its
$d$-dimensional linear subspaces, and let $C_d(X;K)$ be the
$K$-vector space with basis $\{[U]:U\in\Gr_d(X)\}$. For a chain
$\alpha=\sum_Ua_U[U]$, its support and support size are
\[
    \supp(\alpha)=\{U:a_U\ne0\},
    \qquad
    |\alpha|=|\supp(\alpha)|.
\]
If $Y\le X$, we regard $C_d(Y;K)$ as the corresponding coordinate
subspace of $C_d(X;K)$.
For $d\ge1$, define
\[
    \partial_d[U]
    =
    \sum_{W\in\Gr_{d-1}(U)}[W],
\]
and set $\partial_0=0$. 
\begin{observation}
We have $\partial^2=0$.
\end{observation}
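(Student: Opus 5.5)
We need to show $\partial_{d-1}\partial_d=0$. The plan is to compute the composite on a basis element $[U]$ with $U\in\Gr_d(V)$ and $d\ge2$. The case $d=1$ is trivial, since $\partial_0=0$. Expanding the definition,
\[
\partial_{d-1}\partial_d[U]=\sum_{W\in\Gr_{d-1}(U)}\ \sum_{T\in\Gr_{d-2}(W)}[T]=\sum_{T\in\Gr_{d-2}(U)} N_T\,[T],
\]
where $N_T$ is the number of $(d-1)$-spaces $W$ with $T<W<U$. So the whole argument reduces to counting $N_T$ and checking that it vanishes in $K$.

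By the correspondence $W\mapsto W/T$, the intermediate spaces $W$ are in bijection with the $1$-dimensional subspaces of the $2$-dimensional quotient $U/T\cong\F_q^2$. There are exactly $(q^2-1)/(q-1)=q+1$ of these. Hence $N_T=q+1$ for every $T$, and therefore
\[
\partial_{d-1}\partial_d[U]=(q+1)\,\partial_{d-1}\partial_d'[U]\quad\text{(formally)},
\]
that is, every coefficient equals $(q+1)1_K$. By the standing assumption $(q+1)1_K=0$, this is zero. Extending by linearity gives $\partial^2=0$ on all of $C_d(V;K)$.

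There is no real obstacle here. The only point needing care is the lattice-isomorphism step: the interval $[T,U]$ in the subspace lattice is isomorphic to the subspace lattice of $U/T$. This follows from the correspondence theorem for quotients of vector spaces.
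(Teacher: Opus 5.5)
Your proof is correct and is the paper's argument: each $(d-2)$-space $T<U$ lies in exactly $q+1$ hyperplanes of $U$ (the $1$-spaces of $U/T$), so every coefficient of $\partial_{d-1}\partial_d[U]$ is $(q+1)1_K=0$. The displayed formula with the undefined $\partial_d'$ is garbled and should read $\partial_{d-1}\partial_d[U]=(q+1)\sum_{T\in\Gr_{d-2}(U)}[T]$.
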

\begin{proof}
For $d\ge2$, each $(d-2)$-subspace of a $d$-space is counted exactly $q+1$ times in $\partial_{d-1}\partial_d$, and $(q+1)1_K=0$.
\end{proof}
\noindent From now one we assume $(q+1)1_K=0$, hence we write
\[
    Z_d(X;K)=\ker\partial_d,\qquad
    B_d(X;K)=\operatorname{im}\partial_{d+1},\qquad
    H_d(X;K)=Z_d(X;K)/B_d(X;K).
\]
For $d\ge0$, let
$C^d(X;K)=\operatorname{Hom}_K(C_d(X;K),K)$, and let $\delta_U$
denote the coordinate cochain dual to $[U]$. Set $C^{-1}(X;K)=0$.
The coboundary
$d_k:C^k(X;K)\to C^{k+1}(X;K)$ is the dual of
$\partial_{k+1}$, and we set $d_{-1}=0$. Thus
$(d_k\varphi)([Y])=\varphi(\partial_{k+1}[Y])$. We write
\[
    Z^k(X;K)=\ker d_k,\qquad
    B^k(X;K)=\operatorname{im}d_{k-1},\qquad
    H^k(X;K)=Z^k(X;K)/B^k(X;K).
\]
For a cochain $\varphi=\sum_Ua_U\delta_U$, its support and support
size are defined by the same formulas as for chains.

When $K$ is understood, we suppress it from all chain and cochain
spaces above. When the degree is clear, we write simply $\partial$.

\section{Minimum support of cycles}

Throughout this section, let $V=\F_q^n$. We begin with two lemmas about quotient links of cycles.

\begin{lemma}[Quotient links of cycles are cycles]\label{lem:link}
Let $r\ge2$, let
$\alpha=\sum_{U\in\Gr_r(V)}a_U[U]\in Z_r(V)$, and let
$\ell\le V$ be a $1$-space. Define the quotient link of $\alpha$ at
$\ell$ by
\[
 \alpha_\ell
 :=
 \sum_{\substack{U\in\supp(\alpha)\\ \ell\le U}}
 a_U[U/\ell].
\]
Then $\alpha_\ell\in Z_{r-1}(V/\ell)$.
The map $U\mapsto U/\ell$ is bijective on the subspaces containing
$\ell$. In particular, $\alpha_\ell\ne0$ whenever some member of
$\supp(\alpha)$ contains $\ell$. 
\end{lemma}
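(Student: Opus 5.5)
The plan is to compare coefficients directly. Write $\pi:V\to V/\ell$ for the quotient map. I want to show that $\partial_{r-1}\alpha_\ell$ is, coefficient by coefficient, the part of $\partial_r\alpha$ lying over $\ell$. Consider the linear map
\[
\Lambda_\ell: C_k(V)\to C_{k-1}(V/\ell),\qquad [U]\mapsto \begin{cases}[U/\ell], & \ell\le U,\\ 0,&\text{otherwise}.\end{cases}
\]
By definition $\alpha_\ell=\Lambda_\ell(\alpha)$. It therefore suffices to prove the commutation identity $\partial_{r-1}\Lambda_\ell=\Lambda_\ell\partial_r$ on $C_r(V)$, with $r\ge2$. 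Granting it, $\partial_{r-1}\alpha_\ell=\Lambda_\ell(\partial_r\alpha)=0$.

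To prove the identity, check it on a basis element $[U]$ with $U\in\Gr_r(V)$, in two cases.

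\emph{Case $\ell\le U$.} Here $\partial_{r-1}[U/\ell]$ is the sum of the $(r-2)$-subspaces of $U/\ell$. By the correspondence theorem, these are exactly $W/\ell$ for $(r-1)$-spaces $W$ with $\ell\le W\le U$. On the other side, $\Lambda_\ell\partial_r[U]=\sum_{W\in\Gr_{r-1}(U)}\Lambda_\ell[W]$. Only the $W\ge\ell$ survive, and each contributes $[W/\ell]$. The two sides agree term by term.

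\emph{Case $\ell\not\le U$.} The left side is $0$. On the right, any $W\in\Gr_{r-1}(U)$ containing $\ell$ would force $\ell\le U$, so every term vanishes. Note that no use of $(q+1)1_K=0$ is needed here; the identity holds integrally.

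The bijectivity claim is the correspondence theorem: $U\mapsto U/\ell=\pi(U)$ is a bijection from the subspaces of $V$ containing $\ell$ to the subspaces of $V/\ell$, with inverse $\bar U\mapsto\pi^{-1}(\bar U)$. Consequently the coefficients of $\alpha_\ell$ are exactly the $a_U$ for $U\ni\ell$, with no two of them merging. So if some $U\in\supp(\alpha)$ contains $\ell$, then $[U/\ell]$ has coefficient $a_U\neq0$ in $\alpha_\ell$, and hence $\alpha_\ell\neq0$.

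I expect no real obstacle. The only care needed is the degree bookkeeping: $U/\ell$ has dimension $r-1$, so $\alpha_\ell\in C_{r-1}(V/\ell)$. The hypothesis $r\ge2$ ensures $\partial_{r-1}$ is the genuine incidence map (for $r-1\ge1$), so the cycle condition is meaningful.
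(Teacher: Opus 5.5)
Your proposal is correct and is essentially the paper's argument. The paper reads off the coefficient of $[P/\ell]$ in $\partial\alpha_\ell$ as $\sum_{U>P}a_U$, the coefficient of $[P]$ in $\partial\alpha$. That is your chain-map identity $\partial_{r-1}\Lambda_\ell=\Lambda_\ell\partial_r$, checked on target coordinates rather than on source basis elements. The nonvanishing claim follows from the same correspondence-theorem bijection in both.
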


\begin{proof}
Writing $\overline P=P/\ell$, the $(r-2)$-spaces of $V/\ell$
correspond to the $(r-1)$-spaces $P\le V$ containing $\ell$. Hence
the coefficient of $[\overline P]$ in $\partial\alpha_\ell$ is
$\sum_{U>P}a_U$, which is also the coefficient of $[P]$ in
$\partial\alpha$ and is therefore zero.

The quotient map is a bijection between the subspaces of $V$
containing $\ell$ and the subspaces of $V/\ell$. 
\end{proof}

\begin{lemma}[Counting quotient links]
\label{lem:counting-quotient-links}
Let $d\ge2$, let $0\ne\alpha\in Z_d(V)$, put
$S=\supp(\alpha)$, and fix $U_0\in S$. Then
\[
 \sum_{\ell\in\Gr_1(U_0)}|\alpha_\ell|
 =
 [d]_q+
 \sum_{U\in S\setminus\{U_0\}}[\dim(U_0\cap U)]_q.
\]
\end{lemma}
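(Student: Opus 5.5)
We prove the identity by double counting the incidence pairs $(\ell,U)$ with $\ell\in\Gr_1(U_0)$, $U\in S$ and $\ell\le U$.

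For a fixed $1$-space $\ell\le U_0$, Lemma~\ref{lem:link} says that $U\mapsto U/\ell$ is a bijection from the members of $S$ containing $\ell$ onto $\supp(\alpha_\ell)$. The coefficient of $[U/\ell]$ in $\alpha_\ell$ is exactly $a_U\neq0$, so no cancellation can occur: distinct $U\supseteq\ell$ give distinct quotients $U/\ell$. Hence
\[
|\alpha_\ell|=\#\{U\in S:\ell\le U\}.
\]

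Summing over $\ell\in\Gr_1(U_0)$ and exchanging the order of summation gives
\[
\sum_{\ell\in\Gr_1(U_0)}|\alpha_\ell|
=\sum_{U\in S}\#\{\ell\in\Gr_1(U_0):\ell\le U\}
=\sum_{U\in S}\#\Gr_1(U_0\cap U).
\]

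Since $\#\Gr_1(W)=[\dim W]_q$ for every $\F_q$-space $W$, with the convention $[0]_q=0$ covering the case $U_0\cap U=0$, each $U\in S$ contributes $[\dim(U_0\cap U)]_q$. Separating the term $U=U_0$, which contributes $[d]_q$, yields the stated formula.

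No step is a real obstacle. The only point needing care is the no-cancellation observation in the first step, namely that the quotient link keeps every coefficient $a_U$ of a support element containing $\ell$. This is exactly what the bijectivity in Lemma~\ref{lem:link} provides. The hypothesis $d\ge2$ is needed only so that $\alpha_\ell$ is defined via Lemma~\ref{lem:link}.
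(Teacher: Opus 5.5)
Your proposal is correct and is the same double count the paper uses: both sides count pairs $(\ell,U)$ with $U\in S$ and $\ell\in\Gr_1(U_0\cap U)$. The bijectivity of $U\mapsto U/\ell$ from Lemma~\ref{lem:link} gives $|\alpha_\ell|=\#\{U\in S:\ell\le U\}$, which is the one point the paper leaves implicit.
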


\begin{proof}
Both sides count the pairs $(\ell,U)$ with $U\in S$ and
$\ell\in\Gr_1(U_0\cap U)$.
\end{proof}

\subsection{A relative estimate for two-cycles}
\begin{lemma}[Intersection-sensitive lower bound]
\label{lem:relative-two-cycle}
Assume $n\ge3$, let
$\beta=\sum_{U\in\Gr_2(V)}b_U[U]\in Z_2(V)$, and fix
$U_0\in\supp(\beta)$. Put
\[
 \mathcal X=
 \{U\in\supp(\beta)\setminus\{U_0\}:\dim(U\cap U_0)=1\},
 \qquad
 \mathcal Y=
 \{U\in\supp(\beta):U\cap U_0=0\}.
\]
Then
\[
 |\mathcal X|+(q+1)|\mathcal Y|\ge q(q+1).
\]
\end{lemma}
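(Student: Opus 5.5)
The plan is to prove the inequality by double counting, playing a family of $0$-dimensional cycle conditions off against the supports in $\mathcal X$ and $\mathcal Y$. The basic tool is the cycle condition $\partial_2\beta=0$ read at a $1$-space $p$: $\sum_{U\ni p}b_U=0$. Summing this over all points $p$ of a fixed $2$-space $L$ gives
\[
 \sum_{p\in\Gr_1(L)}\ \sum_{U\ni p}b_U
 =(q+1)b_L+\sum_{\dim(U\cap L)=1}b_U
 =\sum_{\dim(U\cap L)=1}b_U=0,
\]
because $(q+1)1_K=0$. So for every $2$-space $L$ the $2$-spaces meeting $L$ in exactly a point carry total coefficient zero. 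If $L\cap U_0$ is a point, then $U_0$ appears in this sum with $b_{U_0}\neq0$. Hence some $U\in\supp(\beta)\setminus\{U_0\}$ meets $L$ in exactly a point; call such a $U$ a \emph{witness} for $L$. Every witness lies in $\mathcal X\cup\mathcal Y$ or contains $U_0$'s point $\ell=L\cap U_0$. The last option is again a member of $\mathcal X$, since any $U\ne U_0$ containing $\ell$ meets $U_0$ in exactly $\ell$.

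Next I will choose a family $\mathcal L$ of $q(q+1)$ test spaces $L$, each meeting $U_0$ in a single point, arranged so that the witness load is controlled. The target is that each $U\in\mathcal X$ is a witness for at most one $L\in\mathcal L$, and each $U\in\mathcal Y$ for at most $q+1$ of them. Then
\[
 q(q+1)=|\mathcal L|\le|\mathcal X|+(q+1)|\mathcal Y|
\]
by counting pairs $(L,\text{witness})$. The natural choice is to fix a $3$-space $T\supseteq U_0$, or to use the lines of the quotient links $\beta_\ell$ from Lemma~\ref{lem:link}, with $q$ test spaces through each of the $q+1$ points $\ell$ of $U_0$. One should also check whether a weighted version works better: give each test space a weight and bound, for each $U\in\mathcal X$, the total weight of test spaces it meets in a point. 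If a single $L$ has many witnesses, the zero-sum condition can be exploited further, since at least two witnesses besides $U_0$ are needed unless one coefficient equals $-b_{U_0}$.

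I expect the main obstacle to be the second step, controlling the load carried by elements of $\mathcal X$. A $U\in\mathcal X$ through $\ell\in U_0$ meets every test space through $\ell$ in at least $\ell$, so a naive family lets one element of $\mathcal X$ witness $q$ test spaces at once. To handle this I will first try to exclude such cheap witnesses by a refinement. Pass to the quotient link $\beta_\ell\in Z_1(V/\ell)$ from Lemma~\ref{lem:link}, and apply the above $2$-space zero-sum argument inside the $3$-space $U+U_0$ to force extra support. If some $U\in\mathcal X$ through $\ell$ survives, the cycle condition at the points of $U\setminus\ell$ should produce further members of $\mathcal X\cup\mathcal Y$, and these should pay for the overcounting. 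Making this charging scheme precise, with constant exactly $q(q+1)$, is where the real work lies. The cases $n=3$, where $\mathcal Y=\emptyset$ automatically, and $n\ge4$ may need to be treated separately.
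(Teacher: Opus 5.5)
Your counting skeleton is the right one: $q(q+1)$ test lines, $q$ through each point of $U_0$, with each member of $\mathcal X$ charged at most once and each member of $\mathcal Y$ at most $q+1$ times. But the proposal stops exactly where the proof has to happen, and the tool you offer for that step cannot carry it.

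In your identity $\sum_{\dim(U\cap L)=1}b_U=0$, the coefficient $b_L$ has cancelled, since it occurs $q+1$ times. So the identity can never certify that the test line $L$ itself lies in $\supp(\beta)$. It only produces some other supported line meeting $L$ in a point, and such witnesses are heavily shared:
\begin{itemize}
\item an $X\in\mathcal X$ through $L\cap U_0$ witnesses every test line through that point;
\item an $X\in\mathcal X$ lying in a plane $H\supset U_0$ witnesses every line of $H$ through any other point of $U_0$.
\end{itemize}
Knowing only that each test line has \emph{some} witness gives no control on the loads. The inequality is also sharp: for $\beta=\partial[H]$ with $U_0<H\in\Gr_3(V)$ one has $\mathcal Y=\varnothing$ and $|\mathcal X|=q(q+1)$, so there is no slack to absorb overcounting. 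The refinements you list are not specific enough to close this. The quotient link $\beta_\ell$ is, for $2$-cycles, just the cycle equation at $\ell$, and the ``extra support in $U+U_0$'' idea is left unspecified.

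The missing idea is to sum the cycle equations only over the $q$ points of $L$ \emph{off} $U_0$. For each $d\in\Gr_1(U_0)$, take the test lines to be the $q$ lines $\ne U_0$ through $d$ in a suitably chosen plane $H(d)\supset U_0$. For fixed $d$ and a plane $H\supset U_0$, put $T_H=\sum_{X\in\mathcal X,\ X\le H,\ X\cap U_0\ne d}b_X$. In the partial sum over the affine points of a test line $L\le H(d)$:
\begin{itemize}
\item $U_0$ and the other supported lines through $d$ disappear;
\item $L$ contributes $qb_L=-b_L$;
\item each $X$ counted in $T_{H(d)}$ meets $L$ in exactly one of these points, and no other member of $\mathcal X$ meets them;
\item each $Y\in\mathcal Y$ meets $H(d)$ in at most one point.
\end{itemize}
Hence $-b_L+T_{H(d)}+(\text{contributions of }\mathcal Y)=0$, and the middle term is the same constant for all $q$ test lines. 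The plane $H(d)$ can be chosen with $T_{H(d)}\ne0$: summing the cycle equations at the $q$ points $e\ne d$ of $U_0$ gives $\sum_{H\supset U_0}T_H=-qb_{U_0}=b_{U_0}\ne0$.

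So every test line either lies in $\mathcal X$ or is met off $U_0$ by some $Y\in\mathcal Y$. Each $X\in\mathcal X$ coincides with at most one test line, because its point on $U_0$ and its plane $U_0+X$ are determined. Each $Y$ meets at most one test line per $d$. This gives exactly the loads you were aiming for.
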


\begin{proof}
We use projective terminology: $1$-spaces are points, $2$-spaces are
lines, and $3$-spaces are planes. Thus $U_0$ is a projective line, the
members of $\mathcal X$ meet $U_0$, and the members of $\mathcal Y$ are
skew to $U_0$.

Let $\mathcal D=\Gr_1(U_0)$, so $|\mathcal D|=q+1$. If
$H\supset U_0$ is a projective plane and $d\in\mathcal D$, define
$\mathcal L_H(d)=\{U\in\Gr_2(H):U\cap U_0=d\}$. Viewing $U_0$ as the
line at infinity of $H$, the set $\mathcal L_H(d)$ is the parallel
class of affine lines of direction $d$; in particular,
$|\mathcal L_H(d)|=q$.

For such $H$ and $d$, put
\[
 T_H^{\ne d}
 =
 \sum_{\substack{X\in\mathcal X\\X\le H,\ X\cap U_0\ne d}} b_X.
\]
Thus $T_H^{\ne d}$ records the total contribution of $\mathcal{X} \cap Gr_2(H)$ to the cycle equations in all directions in $\mathcal D \setminus \{d\}$.

\begin{claim}\label{cl:choose-plane}
For every $d\in\mathcal D$, there is a projective plane $H(d)\supset U_0$ such
that $T_{H(d)}^{\ne d}\ne0$.
\end{claim}

\begin{proof}
For each $e\in\mathcal D$, the supported lines containing $e$ are
$U_0$ and the members $X\in\mathcal X$ with $X\cap U_0=e$. Hence the
cycle equation at $e$ is
\[
 b_{U_0}+\sum_{\substack{X\in\mathcal X\\X\cap U_0=e}}b_X=0.
\]
Fix $d$ and sum these equations over the $q$ points $e\ne d$. Every
$X\in\mathcal X$ with $X\cap U_0\ne d$ belongs to the unique plane
$U_0+X$. Hence
\begin{align*}
 \sum_{\substack{H\in\Gr_3(V)\\U_0<H}}T_H^{\ne d}
 &=
 \sum_{e\ne d}\ \sum_{\substack{X\in\mathcal X\\X\cap U_0=e}}b_X\\
 &=-q b_{U_0}=b_{U_0}\ne0.
\end{align*}
The last equality uses $(q+1)1_K=0$.
At least one summand is therefore nonzero.
\end{proof}

We now show that, for each $d\in\mathcal D$, every line in
$\mathcal L_{H(d)}(d)$ either belongs to $\mathcal X$ or meets a
member of $\mathcal Y$.
\begin{claim}\label{cl:parallel-line}
Fix $d\in\mathcal D$. Every line
$U\in\mathcal L_{H(d)}(d)$ either belongs to $\mathcal X$ or is met
at an affine point by some $Y\in\mathcal Y$.
\end{claim}

\begin{proof}
Sum the cycle equations at the $q$ affine points of $U$. Their sum is
zero. The line $U_0$ contains none of these points. Moreover, if
$F\in\mathcal X$ contains one of them, then $F\le H(d)$, since both
that point and $F\cap U_0$ lie in $H(d)$. We group the remaining
contributions as follows.

\begin{description}
\item[Direction $d$.]
The line $U$ contributes $q b_U$, once for each of its affine points.
Every other line in $H(d)$ of direction $d$ is parallel to $U$ and
contributes nothing.

\item[Other directions in $H(d)$.]
Every $F\in\mathcal X$ with $F\le H(d)$ and $F\cap U_0\ne d$ meets $U$
in exactly one affine point. These lines contribute
$T_{H(d)}^{\ne d}$ in total.

\item[Lines in $\mathcal Y$.]
A line $Y\in\mathcal Y$ cannot lie in $H(d)$, since every line in the
plane $H(d)$ meets $U_0$. Hence $Y\cap H(d)$ is at most one affine
point. Their total contribution is the double sum below.
\end{description}

Consequently,
\[
 q b_U+T_{H(d)}^{\ne d}
 +
 \sum_{\substack{r\in\Gr_1(U)\\r\nleq U_0}}
 \ \sum_{\substack{Y\in\mathcal Y\\r\le Y}} b_Y
 =0.
\]
If $U\notin\mathcal X$, then $b_U=0$. Since
$T_{H(d)}^{\ne d}\ne0$, the displayed identity implies that the
double sum over $\mathcal Y$ is nonzero. Hence some $Y\in\mathcal Y$
meets $U$ at an affine point.
\end{proof}

We now count the selected lines. For each $d\in\mathcal D$, consider
the $q$ lines in the parallel class $\mathcal L_{H(d)}(d)$. Altogether
this gives $q(q+1)$ selected lines. By Claim~\ref{cl:parallel-line}, each
of them either belongs to $\mathcal X$ or is met by a member of
$\mathcal Y$.

A fixed $X\in\mathcal X$ can occur among the selected lines at most
once: its direction is the unique point $X\cap U_0$, and the plane
containing both $U_0$ and $X$ is $U_0+X$.

A fixed $Y\in\mathcal Y$ meets at most one selected line for each
$d\in\mathcal D$. Indeed, $Y$ is not contained in $H(d)$, since every
projective line in the plane $H(d)$ meets $U_0$. Thus $Y\cap H(d)$ is
at most one affine point, and through that point there is exactly one
line of direction $d$. Since there are $q+1$ directions in total,
$Y$ meets at most $q+1$ selected lines. Counting the selected lines
gives $q(q+1)\le |\mathcal X|+(q+1)|\mathcal Y|$, as required.
\end{proof}

\subsection{Degrees three and higher}

We first prove Theorem~\ref{thm:minimum-support} for $d=3$.
\begin{proposition}[The degree-$3$ case]\label{prop:d3}
Let $n\ge4$ and let $0\ne\alpha\in Z_3(V)$. Then
\[
 |\alpha|\ge[4]_q=q^3+q^2+q+1.
\]
\end{proposition}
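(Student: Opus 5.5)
The plan is to fix a single $3$-space in the support, restrict to the $2$-cycles seen through each of its points via Lemma~\ref{lem:link}, and add up the bounds of Lemma~\ref{lem:relative-two-cycle}. Lemma~\ref{lem:counting-quotient-links} explains why this kind of count works, but I will use the finer $\mathcal X$/$\mathcal Y$ bookkeeping rather than that identity directly.

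\emph{Setup.} Put $S=\supp(\alpha)$ and fix $U_0\in S$. For $k\in\{0,1,2\}$ let $a_k$ be the number of $U\in S\setminus\{U_0\}$ with $\dim(U\cap U_0)=k$. It suffices to show
\[
a_1+a_2\ge q[3]_q=q^3+q^2+q,
\]
since then $|\alpha|\ge 1+a_1+a_2\ge[4]_q$.

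\emph{Applying the two-cycle lemma at each point.} For each point $\ell\in\Gr_1(U_0)$, Lemma~\ref{lem:link} shows that $\alpha_\ell\in Z_2(V/\ell)$. Its support contains $U_0/\ell$, so $\alpha_\ell\ne0$. Since $n\ge4$, we have $\dim V/\ell\ge3$, so Lemma~\ref{lem:relative-two-cycle} applies to $\beta=\alpha_\ell$ with base line $U_0/\ell$. It gives
\[
|\mathcal X_\ell|+(q+1)|\mathcal Y_\ell|\ge q(q+1),
\]
where $\mathcal X_\ell$ and $\mathcal Y_\ell$ are the sets $\mathcal X$ and $\mathcal Y$ of that lemma for $\alpha_\ell$.

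\emph{Identifying $\mathcal X_\ell$ and $\mathcal Y_\ell$ upstairs.} For $U\ni\ell$ we have $(U/\ell)\cap(U_0/\ell)=(U\cap U_0)/\ell$, using the bijection from Lemma~\ref{lem:link}. Hence:
\begin{itemize}
\item[(i)] $U/\ell\in\mathcal X_\ell$ exactly when $\dim(U\cap U_0)=2$ and $\ell\le U\cap U_0$;
\item[(ii)] $U/\ell\in\mathcal Y_\ell$ exactly when $U\cap U_0=\ell$.
\end{itemize}

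\emph{Double counting over $\ell\in\Gr_1(U_0)$.} Each $U$ with $\dim(U\cap U_0)=2$ contains $q+1$ points of $U_0$, so by (i) it is counted $q+1$ times in $\sum_\ell|\mathcal X_\ell|$. Each $U$ with $\dim(U\cap U_0)=1$ lies in exactly one $\mathcal Y_\ell$ by (ii). Spaces meeting $U_0$ trivially contribute to neither count. Therefore
\[
(q+1)a_2+(q+1)a_1\;=\;\sum_{\ell}\bigl(|\mathcal X_\ell|+(q+1)|\mathcal Y_\ell|\bigr)\;\ge\;[3]_q\,q(q+1).
\]
Dividing by $q+1$ gives $a_1+a_2\ge q[3]_q$, which is the required bound.

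The main obstacle is already handled by Lemma~\ref{lem:relative-two-cycle}. The only delicate point here is the weighting: the factor $q+1$ attached to $\mathcal Y$ in that lemma must exactly compensate for the factor $q+1$ with which $2$-dimensional intersections are overcounted. That is why this relative bound, rather than the crude bound $|\alpha_\ell|\ge 2(q+1)$, is the one to use.
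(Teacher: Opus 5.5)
Your proposal is correct and follows essentially the same argument as the paper. You fix $U_0$, apply Lemma~\ref{lem:relative-two-cycle} to each quotient link $\alpha_\ell$ (using $\dim V/\ell\ge 3$), and double count: $2$-dimensional intersections are counted $q+1$ times and $1$-dimensional ones once with weight $q+1$, which gives $a_1+a_2\ge q[3]_q$ and hence $|\alpha|\ge[4]_q$.
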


\begin{proof}
Fix $U_0\in\supp(\alpha)$. For $i=0,1,2$, let
\[
 \mathcal S_i=
 \{U\in\supp(\alpha)\setminus\{U_0\}:\dim(U\cap U_0)=i\},
 \qquad m_i=|\mathcal S_i|.
\]
For every $1$-space $\ell\le U_0$, Lemma~\ref{lem:link} gives a
nonzero $2$-cycle $\alpha_\ell$ in $V/\ell$ such that
$U_0/\ell\in\supp(\alpha_\ell)$. Define
\[
 x_\ell=
 |\{U\in\mathcal S_2:\ell\le U\cap U_0\}|,
 \qquad
 y_\ell=
 |\{U\in\mathcal S_1:U\cap U_0=\ell\}|.
\]
For every $U\in\supp(\alpha)$ containing $\ell$, we have
$(U/\ell)\cap(U_0/\ell)=(U\cap U_0)/\ell$. Hence
Lemma~\ref{lem:relative-two-cycle} gives
$x_\ell+(q+1)y_\ell\ge q(q+1)$ for every $\ell\le U_0$.

Sum this inequality over the $[3]_q$ $1$-spaces of $U_0$. Each member
of $\mathcal S_2$ is counted once for every $1$-space in its
$2$-dimensional intersection with $U_0$, hence $q+1$ times; each
member of $\mathcal S_1$ is counted once. Thus
$(q+1)(m_1+m_2)\ge[3]_q q(q+1)$, and therefore
$m_1+m_2\ge q[3]_q$. It follows that
\[
 |\alpha|
 =1+m_0+m_1+m_2
 \ge1+q[3]_q=[4]_q.
\]
\end{proof}

\begin{proof}[Proof of Theorem~\ref{thm:minimum-support} for $d\ge3$]
We prove the lower bound by induction on $d$.
Proposition~\ref{prop:d3} is the base case. Fix $d\ge3$, assume the
result holds in degree $d$ in every ambient dimension, and let
$0\ne\alpha\in Z_{d+1}(V)$.
Fix $U_0\in\supp(\alpha)$ and put $m=|\alpha|$. For every
$1$-space $\ell\le U_0$, the quotient link $\alpha_\ell$ is a
nonzero $d$-cycle in $V/\ell$, so
$|\alpha_\ell|\ge[d+1]_q$.
Lemma~\ref{lem:counting-quotient-links} gives
\begin{align*}
 [d+1]_q^2
 &\le
 \sum_{\ell\in\Gr_1(U_0)}|\alpha_\ell|\\
 &=
 [d+1]_q+
 \sum_{U\in\supp(\alpha)\setminus\{U_0\}}
 [\dim(U\cap U_0)]_q.
\end{align*}
Every $U\ne U_0$ satisfies $\dim(U\cap U_0)\le d$. Hence
$[d+1]_q^2\le[d+1]_q+(m-1)[d]_q$.
Using $[d+1]_q-1=q[d]_q$, we obtain
$m\ge1+q[d+1]_q=[d+2]_q$.
This proves the lower bound by induction.

For sharpness in any degree $3\le k<n$, let $E\le V$ have dimension
$k+1$.
Every $(k-1)$-space in $E$ lies in exactly $q+1$ of its $k$-spaces,
so $\partial_k\partial_{k+1}[E]=0$. Moreover,
\[
 \partial_{k+1}[E]
 =\sum_{D\in\Gr_k(E)}[D],
 \qquad
 |\Gr_k(E)|=[k+1]_q.
\]
Thus $\partial_{k+1}[E]$ attains the bound.
\end{proof}

\subsection{The low-dimensional cases}

We now classify the minimum support size of cycles in degrees less
than $3$.
\begin{proposition}[Low-dimensional cycles]
\label{prop:low-dimensional-cycles}
The following hold:
\begin{enumerate}
    \item $d=1$: $Z_1(V)=0$ for $n\le1$, while
    $\min_{0\ne\alpha\in Z_1(V)}|\alpha|=2$ for $n\ge2$.
    \item $d=2$: $Z_2(V)=0$ for $n\le2$, while
    \[
     \min_{0\ne\alpha\in Z_2(V)}|\alpha|
     =
     \begin{cases}
        [3]_q,&n=3,\\
        2(q+1),&n\ge4.
     \end{cases}.
    \]
\end{enumerate}
\end{proposition}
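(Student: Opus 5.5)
For $d=1$ I will argue directly from the definitions. Since $C_0(V)$ is spanned by the single element $[0]$ and $\partial_1[\ell]=[0]$ for every point $\ell$, a $1$-chain is a cycle exactly when its coefficients sum to zero. So a nonzero cycle cannot have support $1$. If $n\ge2$, any difference $[\ell]-[\ell']$ of two distinct points is a cycle of support $2$. If $n=1$ there is only one point, so $Z_1(V)=0$.

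For $d=2$ and $n\le2$, the only $2$-space is $V$ itself (when $n=2$), and $\partial_2[V]$ is the sum of its points, which is nonzero. Hence $Z_2(V)=0$.

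For $n=3$, I will apply Lemma~\ref{lem:relative-two-cycle} to any $U_0$ in the support. In a projective plane any two lines meet, so $\mathcal Y=\emptyset$ and $|\mathcal X|\ge q(q+1)$. This gives $|\alpha|\ge1+q(q+1)=[3]_q$, and the bound is attained by $\partial_3[V]$.

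For $n\ge4$ the upper bound will come from a hyperbolic quadric in a $4$-subspace $W\le V$, say the Klein quadric $x_1x_2=x_3x_4$. It carries two rulings $\mathcal R_1,\mathcal R_2$, each consisting of $q+1$ lines. Every point of the quadric lies on exactly one line of each ruling, and points off the quadric lie on no line of either ruling. Therefore
\[
\alpha=\sum_{L\in\mathcal R_1}[L]-\sum_{L\in\mathcal R_2}[L]
\]
has coefficient $1-1=0$ at every point. So $\alpha$ is a cycle of support $2(q+1)$; this uses only that the two rulings are disjoint.

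For the lower bound when $n\ge4$, take $0\ne\beta\in Z_2(V)$ with support $S$. The cycle equation at a point $e$ reads $\sum_{U\ni e}b_U=0$, so every point covered by $S$ lies on at least two lines of $S$. Fix $U_0\in S$. The $q+1$ points of $U_0$ each lie on another member of $S$, and such a member meets $U_0$ in only one point. Hence $|\mathcal X|\ge q+1$, and so $|S|\ge q+2+|\mathcal Y|$. Combining this with Lemma~\ref{lem:relative-two-cycle} gives
\[
|S|\ \ge\ \max\bigl(q+2+|\mathcal Y|,\ 1+q(q+1)-q|\mathcal Y|\bigr).
\]
This is already at least $2(q+1)$ unless $|\mathcal Y|\le q-1$, and the worst case $|\mathcal Y|=q-1$ only yields $2q+1$.

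Before handling that gap I will dispose of the case where $\mathcal Y=\emptyset$ for every choice of $U_0$, i.e.\ all lines of $S$ pairwise meet. Then either all of $S$ lies in one plane, giving $|S|\ge[3]_q\ge2(q+1)$ by the $n=3$ case, or all lines of $S$ pass through a common point $p$. In the latter case a point $e\ne p$ of $U_0$ lies on only one line of $S$, contradicting the cycle equation at $e$. So we may choose $U_0$ with $\mathcal Y\ne\emptyset$.

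The main obstacle is closing the off-by-one gap when $1\le|\mathcal Y|\le q-1$ and $|S|=2q+1$. In that situation all the inequalities above are nearly tight: $|\mathcal X|=q+1$ or close to it, and each point of $U_0$ lies on exactly one line of $\mathcal X$. My first attempt is to rerun the same argument with $U_0$ replaced by a skew line $Y\in\mathcal Y$, using the symmetric estimates for $Y$ and counting point–line incidences. The identity $(q+1)|S|=\sum_e\deg(e)\ge2\,|\bigcup S|$ should force the $q+1$ lines meeting $U_0$ to also meet $Y$. That in turn should force the supported lines to form a regulus-like configuration, in which the opposite regulus is also needed, pushing $|S|$ up to at least $2(q+1)$. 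The alternative I would try is to pick $U_0$ minimizing $|\mathcal Y|$ and sharpen the proof of Claim~\ref{cl:parallel-line}: a single $Y$ cannot meet selected lines in all $q+1$ directions with nonzero total contribution.
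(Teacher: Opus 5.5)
Your arguments for $d=1$, for $n\le2$, and for $n=3$ are correct. For $n=3$, Lemma~\ref{lem:relative-two-cycle} with $\mathcal Y=\emptyset$ gives $|\alpha|\ge1+q(q+1)$; this is a valid alternative to the paper's direct summation, which additionally shows that every $2$-cycle of a $3$-space is a multiple of $\partial_3[V]$. The upper bound $2(q+1)$ from the two rulings of a hyperbolic quadric in a $4$-subspace is also correct; it is the paper's explicit chain.

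\textbf{The gap is the lower bound $|S|\ge2(q+1)$ for $n\ge4$.} Your two estimates leave open exactly the case $|\mathcal Y|=q-1$, $|S|=2q+1$; when $|\mathcal Y|\le q-2$ the second bound already gives $3q+1$. Your last paragraph does not close this case. The claims that incidence counting ``should force'' the lines meeting $U_0$ to meet $Y$, and that a ``regulus-like configuration'' needs its opposite regulus, are heuristics, not proved steps. The paragraph excluding $\mathcal Y=\emptyset$ for all $U_0$ does not help either: for a $U_0$ with $\mathcal Y=\emptyset$, Lemma~\ref{lem:relative-two-cycle} already gives $|S|\ge[3]_q$.

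\textbf{Fix within your framework.} The missing idea is to look inside a plane.
\begin{itemize}
\item If $|S|=2q+1$, equality must hold for every $U_0\in S$. So $|\mathcal X|=q+1$, and each point of $U_0$ lies on exactly one member of $\mathcal X$. Hence every covered point lies on exactly two supported lines.
\item Let $H$ be the plane spanned by two meeting supported lines, and let $f\ge2$ be the number of supported lines in $H$. They pairwise meet in distinct points, so $f\le q+2$.
\item If $f=q+2$, each of these lines meets the other $q+1$ at all of its $q+1$ points. So every covered point of $H$ has both of its supported lines inside $H$. Then $\alpha_H$ is a nonzero cycle in $H$ of support $q+2<[3]_q$, contradicting the case $n=3$.
\item If $f\le q+1$, each of the $f$ lines has $q+2-f$ points on no other supported line of $H$. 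Each such point needs its own supported line outside $H$, since such a line meets $H$ in at most one point. Hence $|S|\ge f+f(q+2-f)=2(q+1)+(f-2)(q+1-f)\ge2(q+1)$.
\end{itemize}

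\textbf{The paper's route.} The paper avoids both Lemma~\ref{lem:relative-two-cycle} and any equality analysis. It chooses a plane $H$ minimizing $f=s_H$ subject to $s_H\ge2$.
\begin{itemize}
\item If $f\le q+1$, it uses $\sum_Lt_L=f(q+1)$ and $\sum_L\binom{t_L}{2}=\binom f2$ to find at least $f(q+2-f)$ points of $H$ on exactly one line of $\alpha_H$. Each of these points needs a distinct supported line outside $H$.
\item If $f\ge q+2$, either $\partial\alpha_H=0$, so $s_H=[3]_q$, or it builds a second plane $H'$ with $H\cap H'$ a supported line and $s_{H'}\ge f$. This gives $|S|\ge2f-1>2(q+1)$.
\end{itemize}
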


\begin{proof}
A $1$-chain $\sum_L a_L[L]$ is a cycle precisely when $\sum_L a_L=0$. Thus a
nonzero $1$-cycle contains at least two $1$-spaces, and
$[L_1]-[L_2]$ attains the bound for $n\ge2$.

For $n<2$ there are no $2$-spaces, while for $n=2$ the unique
$2$-space has nonzero boundary. Suppose that $n=3$, and fix
$0\ne\alpha=\sum_{P\in\Gr_2(V)}a_P[P]\in Z_2(V)$. For each
$P_0\in\Gr_2(V)$, summing the cycle equations over the $q+1$
$1$-spaces of $P_0$ gives
$0=(q+1)a_{P_0}+\sum_{P\ne P_0}a_P=\sum_{P\ne P_0}a_P$.
Thus, if $s=\sum_Pa_P$, then $a_{P_0}=s$ for every $P_0$.
Consequently, $\alpha=s\,\partial_3[V]$, and $|\alpha|=[3]_q$.

It remains to consider $n\ge4$. Fix
$0\ne\alpha=\sum_{P\in\Gr_2(V)}a_P[P]\in Z_2(V)$. For a $3$-space
$H\le V$, put
$\alpha_H=\sum_{P\in\Gr_2(H)}a_P[P]$ and $s_H=|\alpha_H|$.
Some $H$ satisfies $s_H\ge2$: indeed, if
$P\in\supp(\alpha)$ and $L<P$ is a $1$-space, then the cycle equation
at $L$ gives another supported $2$-space $P'\ne P$ containing $L$,
and $H=P+P'$ contains both. Define
$f=\min\{s_H:H\in\Gr_3(V),\ s_H\ge2\}$.

Assume first that $f\ge q+2$, and choose $H$ with $s_H=f$. If
$\partial\alpha_H=0$, then the case $n=3$ proved above gives
$s_H=[3]_q$, and hence $|\alpha|\ge[3]_q\ge2(q+1)$. Otherwise choose
a $1$-space
$L\in\supp(\partial\alpha_H)$. Since $\partial\alpha=0$, there is
$P\in\supp(\alpha)\setminus\Gr_2(H)$ containing $L$. Choose also
$Q\in\supp(\alpha_H)$ containing $L$, and put $H'=P+Q$. Then
$H'\in\Gr_3(V)$, the spaces $P$ and $Q$ belong to
$\supp(\alpha_{H'})$, and $H\cap H'=Q$. Thus $s_{H'}\ge2$, so the
definition of $f$ gives $s_{H'}\ge f$. Therefore
$|\alpha|\ge s_H+s_{H'}-1\ge2f-1>2(q+1)$.

Suppose now that $2\le f\le q+1$, and choose $H$ with $s_H=f$. For
a $1$-space $L\le H$, let $t_L$ be the number of members of
$\supp(\alpha_H)$ containing $L$. Then
\[
 \sum_{L\in\Gr_1(H)}t_L=f(q+1),
 \qquad
 \sum_{L\in\Gr_1(H)}\binom{t_L}{2}=\binom f2,
\]
because two distinct $2$-spaces in $H$ meet in a unique $1$-space. Observe that a
$1$-space with $t_L=1$ belongs to $\supp(\partial\alpha_H)$. Consequently, 
\begin{align*}
 |\partial\alpha_H|
 &\ge
 \sum_{L\in\Gr_1(H)}\left(t_L-2\binom{t_L}{2}\right)
 =f(q+2-f).
\end{align*}
Indeed, the summand on the right is $1$ when $t_L=1$, vanishes when
$t_L\in\{0,2\}$, and is nonpositive when $t_L\ge3$.

Since $\partial\alpha=0$, every $1$-space in
$\supp(\partial\alpha_H)$ is contained in a supported $2$-space
outside $H$. Such a $2$-space contains at most one $1$-space of $H$,
and therefore
\begin{align*}
 |\alpha|
 &\ge f+f(q+2-f)\\
 &=2(q+1)+(f-2)(q+1-f)\\
 &\ge2(q+1).
\end{align*}

The chain $[L_1]-[L_2]$ attains the degree-$1$ bound. For $d=2$ and
$n=3$, the boundary $\partial_3[V]$ attains the bound $[3]_q$.
For the $d=2$ and $n=4$ it is easy to verify that the following chain is a cycle: \[\sum_{\lambda \in F_q \cup \infty} \langle e_1+\lambda e_2, e_3+\lambda e_4 \rangle  - \sum_{\lambda \in F_q \cup \infty} \langle e_1+\lambda e_3, e_2+\lambda e_4 \rangle.\]
\end{proof}

Together with the case $d\ge3$ proved above, this completes the proof
of Theorem~\ref{thm:minimum-support}.

\subsection{A quotient-link defect estimate}

We record a defect identity and a consequence of it that will be used in
the proof of Theorem~\ref{thm:gap-stability}.

\begin{lemma}[Quotient-link defect identity]
\label{lem:quotient-link-defect}
Let $d\ge4$, let $0\ne\alpha\in Z_d(V)$, put $S=\supp(\alpha)$, and
set $\delta=|S|-[d+1]_q\ge0$, where nonnegativity follows from
Theorem~\ref{thm:minimum-support}. For every $U_0\in S$,
\begin{equation}\label{eq:quotient-link-defect}
 \delta[d-1]_q
 =
 \sum_{U\in S\setminus\{U_0\}}
 \bigl([d-1]_q-[\dim(U_0\cap U)]_q\bigr)
 +
 \sum_{\ell\in\Gr_1(U_0)}\bigl(|\alpha_\ell|-[d]_q\bigr).
\end{equation}
\end{lemma}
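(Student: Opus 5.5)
The identity is pure bookkeeping: I would derive it from Lemma~\ref{lem:counting-quotient-links} together with one Gaussian-binomial identity. The cycle condition enters only through that lemma, and the nonnegativity of $\delta$ comes from Theorem~\ref{thm:minimum-support}. The identity itself holds for any $d\ge2$, so $d\ge4$ plays no role in its proof.

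First, I would rewrite the second sum on the right of \eqref{eq:quotient-link-defect}. Since $|\Gr_1(U_0)|=[d]_q$, Lemma~\ref{lem:counting-quotient-links} gives
\[
\sum_{\ell\in\Gr_1(U_0)}\bigl(|\alpha_\ell|-[d]_q\bigr)
=[d]_q-[d]_q^2+\sum_{U\in S\setminus\{U_0\}}[\dim(U_0\cap U)]_q .
\]
Adding the first sum, $\sum_{U\ne U_0}\bigl([d-1]_q-[\dim(U_0\cap U)]_q\bigr)$, the intersection terms cancel exactly. The right-hand side therefore equals
\[
(|S|-1)[d-1]_q+[d]_q-[d]_q^2 .
\]

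Second, I would compare this with the left-hand side, $\delta[d-1]_q=(|S|-[d+1]_q)[d-1]_q$. After cancelling $|S|[d-1]_q$ from both sides, the claim reduces to
\[
[d]_q^2-[d+1]_q[d-1]_q=[d]_q-[d-1]_q=q^{d-1}.
\]
To check it, I would multiply through by $(q-1)^2$ and use $[a]_q=(q^a-1)/(q-1)$. This gives
\[
(q^d-1)^2-(q^{d+1}-1)(q^{d-1}-1)=q^{d+1}-2q^d+q^{d-1}=q^{d-1}(q-1)^2 ,
\]
as required.

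There is no real obstacle here. The only step requiring care is this last $q$-identity, and the direct expansion above settles it.
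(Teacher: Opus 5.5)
Your proposal is correct and follows essentially the same route as the paper. Both substitute Lemma~\ref{lem:counting-quotient-links} into the right-hand side, cancel the intersection terms, and reduce to $(|S|-1)[d-1]_q+[d]_q-[d]_q^2=\delta[d-1]_q$. The only difference is that the paper closes this by factoring with $[d]_q-1=q[d-1]_q$, whereas you expand the closed forms of the $q$-integers.
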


\begin{proof}
Substituting the identity from
Lemma~\ref{lem:counting-quotient-links} into the right-hand side of
\eqref{eq:quotient-link-defect} gives
\begin{align*}
&\sum_{U\in S\setminus\{U_0\}}
 \bigl([d-1]_q-[\dim(U_0\cap U)]_q\bigr)
 +\sum_{\ell\in\Gr_1(U_0)}\bigl(|\alpha_\ell|-[d]_q\bigr)\\
&\qquad=(|S|-1)[d-1]_q+[d]_q-[d]_q^2\\
&\qquad=\delta[d-1]_q.
\end{align*}
The last equality uses $[d]_q-1=q[d-1]_q$.
\end{proof}

\begin{corollary}\label{cor:quotient-link-defect}
Under the assumptions of Lemma~\ref{lem:quotient-link-defect}, for
every $U_0\in S$,
\[
 \bigl|\{U\in S\setminus\{U_0\}:
 \dim(U_0\cap U)\le d-2\}\bigr|
 \le \frac{\delta[d-1]_q}{q^{d-2}}
 \le\left(1+\frac2q\right)\delta.
\]
\end{corollary}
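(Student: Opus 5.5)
The plan is to read off the bound directly from the defect identity \eqref{eq:quotient-link-defect} of Lemma~\ref{lem:quotient-link-defect}, once we know that every term on its right-hand side is nonnegative.

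First, the second sum. For each $1$-space $\ell\le U_0$, Lemma~\ref{lem:link} shows that $\alpha_\ell$ is a nonzero $(d-1)$-cycle in $V/\ell$. Since $d-1\ge3$, Theorem~\ref{thm:minimum-support} gives $|\alpha_\ell|\ge[d]_q$. Hence every summand $|\alpha_\ell|-[d]_q$ is nonnegative.

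Next, the first sum. For $U\in S\setminus\{U_0\}$ we have $\dim(U_0\cap U)\le d-1$, so every summand $[d-1]_q-[\dim(U_0\cap U)]_q$ is nonnegative. When $\dim(U_0\cap U)\le d-2$, the summand is at least
\[
[d-1]_q-[d-2]_q=q^{d-2}.
\]
Dropping all other nonnegative terms from \eqref{eq:quotient-link-defect}, we get
\[
q^{d-2}\cdot\bigl|\{U\in S\setminus\{U_0\}:\dim(U_0\cap U)\le d-2\}\bigr|\le\delta[d-1]_q.
\]
Dividing by $q^{d-2}$ gives the first inequality.

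For the second inequality, write
\[
\frac{[d-1]_q}{q^{d-2}}=\sum_{j=0}^{d-2}q^{-j}
\le 1+\frac1q+\frac{1}{q^2}\cdot\frac{1}{1-1/q}
=1+\frac1q+\frac{1}{q(q-1)}.
\]
Since $q\ge2$, we have $q-1\ge1$, so $\frac{1}{q(q-1)}\le\frac1q$. Therefore $\frac{[d-1]_q}{q^{d-2}}\le 1+\frac2q$, and multiplying by $\delta\ge0$ gives the claimed bound.

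There is no substantive obstacle here. The one point needing care is that $d\ge4$ is exactly what makes the quotient links $(d-1)$-cycles with $d-1\ge3$, so that the $d\ge3$ case of Theorem~\ref{thm:minimum-support} applies to them. Without that hypothesis the exceptional low-degree girths would break the nonnegativity of the second sum.
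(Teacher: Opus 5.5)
Your proposal is correct and follows the paper's proof exactly. You use the defect identity \eqref{eq:quotient-link-defect} with both sums nonnegative, the latter via Theorem~\ref{thm:minimum-support} in degree $d-1\ge3$. Each far $U$ contributes at least $q^{d-2}$, and your geometric-series bound $\frac{[d-1]_q}{q^{d-2}}\le 1+\frac1q+\frac{1}{q(q-1)}=1+\frac{1}{q-1}\le 1+\frac2q$ is the same estimate the paper uses.
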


\begin{proof}
If $U\ne U_0$, then $\dim(U_0\cap U)\le d-1$. Moreover, every quotient
link $\alpha_\ell$ with $\ell\in\Gr_1(U_0)$ is nonzero by
Lemma~\ref{lem:link}, so Theorem~\ref{thm:minimum-support}, applied in
degree $d-1$, gives $|\alpha_\ell|\ge[d]_q$. Each $U$ with
$\dim(U_0\cap U)\le d-2$ contributes at least
$[d-1]_q-[d-2]_q=q^{d-2}$ to the first sum in
\eqref{eq:quotient-link-defect}. 
Finally,
\[
\frac{[d-1]_q}{q^{d-2}}
=1+\frac1q+\cdots+\frac1{q^{d-2}}
\le1+\frac1{q-1}
\le1+\frac2q.
\]
\end{proof}

\section{Cones, expansion, and stability}
\label{sec:cones-and-expansion}

For a finite-dimensional $\F_q$-space $X$ and $j\ge0$, we consider the coboundary operator on $X$,
\[
 d_j^X:C_j(X)\longrightarrow C_{j+1}(X),
 \qquad
 d_j^X[U]
 =
 \sum_{\substack{Y\in\Gr_{j+1}(X)\\U<Y}}[Y].
\]
The superscript specifies the ambient space in which the cofaces are
taken. Set $d_{-1}^X=0$.
The following identity is the main algebraic input to the
cone construction.

\begin{lemma}[Incidence commutator]
\label{lem:incidence-commutator}
Let $M$ be an $m$-dimensional $\F_q$-space. For every
$0\le j\le m$,
\[
 \partial_{j+1}d_j^M-d_{j-1}^M\partial_j
 =
 \bigl([m-j]_q-[j]_q\bigr)I
\]
on $C_j(M)$. If $\dim M=2j+1$ and $z\in Z_j(M)$, then
\[
 \partial\bigl(q^{-j}d_j^Mz\bigr)=z.
\]
\end{lemma}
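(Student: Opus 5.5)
The plan is a direct computation on basis vectors. Fix $U\in\Gr_j(M)$ and expand both compositions in terms of $j$-spaces $W\le M$.

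First term, $\partial_{j+1}d_j^M[U]$. We have
\[
\partial_{j+1}d_j^M[U]=\sum_{Y\in\Gr_{j+1}(M),\,Y>U}\ \sum_{W\in\Gr_j(Y)}[W].
\]
The coefficient of $[U]$ is the number of $(j+1)$-spaces containing $U$. This equals $|\Gr_1(M/U)|=[m-j]_q$. For $W\ne U$, a common $(j+1)$-space $Y$ exists only when $\dim(U\cap W)=j-1$, and then $Y=U+W$ is unique. So the coefficient of $[W]$ is $1$ if $\dim(U\cap W)=j-1$ and $0$ otherwise.

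Second term, $d_{j-1}^M\partial_j[U]$. We have
\[
d_{j-1}^M\partial_j[U]=\sum_{P\in\Gr_{j-1}(U)}\ \sum_{W\in\Gr_j(M),\,W>P}[W].
\]
The coefficient of $[U]$ is $|\Gr_{j-1}(U)|=[j]_q$. For $W\ne U$, a common $(j-1)$-subspace $P$ exists only when $\dim(U\cap W)=j-1$, and then $P=U\cap W$ is unique. So the coefficient of $[W]$ is again $1$ in that case and $0$ otherwise.

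Subtracting, the off-diagonal terms cancel exactly and the diagonal gives $[m-j]_q-[j]_q$. The edge cases are consistent with this. For $j=0$ we have $d_{-1}=0$ and $[0]_q=0$, and $\partial_1 d_0[0]=0$ since $\partial_0=0$, matching $\partial_1$ as defined on $1$-spaces. For $j=m$ we have $d_m^M=0$ and $[0]_q=0$. I would check these boundary conventions briefly, especially $j=0$, where $\partial_1$ sends a $1$-space to $[0]$.

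For the second statement, take $m=2j+1$, so $[m-j]_q-[j]_q=[j+1]_q-[j]_q=q^j$. If $z\in Z_j(M)$, the term $d_{j-1}\partial_j z$ vanishes, so $\partial_{j+1}d_j^Mz=q^jz$. It remains to see that $q$ is invertible in $K$: from $(q+1)1_K=0$ we get $q1_K=-1_K$, so $q^{-j}=(-1)^j$ makes sense. Dividing by $q^j$ gives the claim.

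There is no real obstacle. The only care needed is the uniqueness of $U+W$ and $U\cap W$ when $\dim(U\cap W)=j-1$, which is what makes the off-diagonal coefficients equal and cancel.
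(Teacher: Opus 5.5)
Your proposal is correct and follows the paper's proof: you compare coefficients on basis vectors, getting diagonal entries $[m-j]_q$ and $[j]_q$ and off-diagonal entries equal to $1$ exactly when $\dim(U\cap W)=j-1$, and then invert $q^j$ using $q1_K=-1_K$. One small slip in your edge-case remark: for $j=0$ the term that vanishes is $d_{-1}\partial_0[\langle 0\rangle]$, whereas $\partial_1 d_0^M[\langle 0\rangle]=[m]_q[\langle 0\rangle]$, which is what your general diagonal count already gives.
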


\begin{proof}
Fix $U,W\in\Gr_j(M)$. If $W=U$, the coefficient of $[W]$ in
$\partial d_j^M[U]$ is $[m-j]_q$, while its coefficient in
$d_{j-1}^M\partial[U]$ is $[j]_q$. If $W\ne U$, both coefficients
are $1$ when $\dim(U\cap W)=j-1$ and are $0$ otherwise. Thus the
off-diagonal coefficients cancel.

If $m=2j+1$ and $z\in Z_j(M)$, the commutator identity and
$\partial z=0$ give $\partial d_j^Mz=q^jz$. Since $q=-1$ in $K$,
multiplication by $q^{-j}$ proves the second assertion.
\end{proof}

The last lemma proved that when $M$ is odd dimensional, every cycle $z$ is filled by $q^{-j}d_j^Mz$, consequently we get:
\begin{corollary}[Odd-dimensional exactness]
\label{cor:odd-dimensional-exactness}
If $\dim V$ is odd, then $H_t(V)=0$ for every $t$.
\end{corollary}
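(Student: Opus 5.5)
The plan is to read the corollary directly off the second assertion of Lemma~\ref{lem:incidence-commutator}. Since $H_t(V)=Z_t(V)/B_t(V)$ and $B_t\subseteq Z_t$ because $\partial^2=0$, it suffices to show that every $t$-cycle is a boundary. Write $\dim V=m=2j_0+1$ and fix $t$; we split into cases according to how $t$ compares with $j_0$.

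\emph{Case $t=j_0$.} Here $\dim V=2t+1$, so for $z\in Z_t(V)$ the lemma gives $z=\partial\bigl(q^{-t}d_t^Vz\bigr)$. Thus $z\in B_t(V)$ and the homology vanishes.

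\emph{Case $t\ne j_0$.} Now $m\ne 2t+1$ and the second assertion no longer applies. I would instead use the commutator identity itself. For $z\in Z_t(V)$, the identity gives
\[
\partial_{t+1}d_t^Vz=([m-t]_q-[t]_q)z .
\]
It remains to evaluate the scalar $c=[m-t]_q-[t]_q$ in $K$. Since $q=-1$ in $K$, we have $[a]_q=1$ if $a$ is odd and $[a]_q=0$ if $a$ is even. Because $m$ is odd, $m-t$ and $t$ have opposite parity, so $c=\pm1\ne0$. Hence $z=\partial\bigl(c^{-1}d_t^Vz\bigr)$.

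The boundary cases need a little care. For $t=0$ we have $\partial_0=0$, and the identity still holds with $d_{-1}=0$. For $t=m$ the space $C_{t+1}=0$; in that case $c=[0]_q-[m]_q=-1$, so the identity reads $0=-z$. Therefore $Z_m=0$, which is consistent with the claim. For $t>m$ all chain groups are zero.

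The only genuine obstacle is checking that the commutator identity holds uniformly in every degree $0\le t\le m$, including the extremes. Once that is granted, the argument is just the parity computation of $[a]_q$ modulo $q+1$. The same computation already shows that the case $t=j_0$ is not special: there $c=[j_0+1]_q-[j_0]_q=\pm1$ as well, which matches the lemma's $q^{-j}=\pm1$.
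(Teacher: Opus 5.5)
Your proof is correct and follows the paper's approach: the corollary comes straight from the incidence commutator (Lemma~\ref{lem:incidence-commutator}). Your parity computation $[m-t]_q-[t]_q=\pm1$ in $K$ makes explicit the step the paper glosses when it says every cycle in an odd-dimensional space is filled by $q^{-t}d_t^Vz$; since your scalar $c$ equals $q^{t}$ in $K$, your filling $c^{-1}d_t^Vz$ is exactly the paper's, and your treatment of the extreme degrees $t=0$ and $t=m$ is correct.
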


\subsection{The cone construction}

\begin{definition}[Cone]
A cone on $V$ through degree $k$ is a family of linear maps
$c_j:C_j(V)\to C_{j+1}(V)$, $0\le j\le k$, such that, with the
convention $c_{-1}=0$,
\[
 \partial c_j+c_{j-1}\partial=I_{C_j(V)}
 \qquad(0\le j\le k).
\]
\end{definition}

Let $\mathbf b=(b_1,\ldots,b_n)$ be an ordered basis of $V$. For
$W\in\Gr_j(V)$ with $2j+1\le n$, let $r_{\mathbf b}(W)$ be the first
index $r$ for which
$\dim(W+\langle b_1,\ldots,b_r\rangle)=2j+1$, and set
\[
 M_{\mathbf b}(W)
 =
 W+\langle b_1,\ldots,b_{r_{\mathbf b}(W)}\rangle.
\]
Set $c_{\mathbf b,-1}=0$. For every $j\ge0$ with $2j+1\le n$ and
every $W\in\Gr_j(V)$, define
\[
 c_{\mathbf b,j}[W]
 =
 q^{-j}d_j^{M_{\mathbf b}(W)}
 \bigl([W]-c_{\mathbf b,j-1}(\partial[W])\bigr),
\]
and extend linearly to $C_j(V)$.
The following proposition verifies that these recursively defined maps are indeed well defined.

\begin{proposition}[Cone identity]\label{prop:cone-identity}
For every $j\ge0$ with $2j+1\le n$ and every $x\in C_j(V)$,
\[
 \partial c_{\mathbf b,j}(x)
 =
 x-c_{\mathbf b,j-1}(\partial x).
\]
Moreover, $c_{\mathbf b,j}[W]$ is supported in
$M_{\mathbf b}(W)$ for every $j\ge0$ with $2j+1\le n$ and every
$W\in\Gr_j(V)$.
\end{proposition}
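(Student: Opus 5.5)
We will prove both assertions simultaneously by induction on $j$. The base case is $j=0$. Here $c_{\mathbf b,-1}=0$. For $W=0$ we have $M_{\mathbf b}(W)=\langle b_1\rangle$ of dimension $1=2\cdot0+1$, and $c_{\mathbf b,0}[0]=d_0^{M}[0]=[\langle b_1\rangle]$. Hence $\partial c_{\mathbf b,0}[0]=[0]$, which is the claim since $\partial_0=0$. The support statement is immediate, because $d_j^{M}$ only produces subspaces of $M$.

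For the inductive step, fix $j\ge1$ with $2j+1\le n$. Assume the cone identity and the support property hold in degree $j-1$; note that $2(j-1)+1\le n$, so $c_{\mathbf b,j-1}$ is defined. By linearity it suffices to treat $x=[W]$ with $W\in\Gr_j(V)$. Set $M=M_{\mathbf b}(W)$, which is well defined: $\dim(W+V)=n\ge 2j+1$, and the dimension grows by at most one per added basis vector, so the first index $r$ exists and $\dim M=2j+1$ exactly. Put
\[
 z=[W]-c_{\mathbf b,j-1}(\partial[W]).
\]
We will show that $z\in C_j(M)$ and that $z\in Z_j(M)$. Lemma~\ref{lem:incidence-commutator}, applied with $\dim M=2j+1$, then gives $\partial(q^{-j}d_j^Mz)=z$, which is exactly the asserted identity. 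The support of $c_{\mathbf b,j}[W]$ lies in $M$ because $d_j^M$ takes values in $C_{j+1}(M)$.

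That $z$ is a cycle follows from the induction hypothesis applied to $y=\partial[W]\in C_{j-1}(V)$:
\[
 \partial z=\partial[W]-\partial c_{\mathbf b,j-1}(\partial[W])
 =\partial[W]-\bigl(\partial[W]-c_{\mathbf b,j-2}(\partial\partial[W])\bigr)=0,
\]
using $\partial^2=0$ (and $c_{\mathbf b,-1}=0$ when $j=1$).

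The main point, and the step I expect to need care, is the support claim $z\in C_j(M)$. Clearly $[W]\in C_j(M)$. For each $(j-1)$-space $P<W$, the induction hypothesis says that $c_{\mathbf b,j-1}[P]$ is supported in $M_{\mathbf b}(P)$. So it suffices to show $M_{\mathbf b}(P)\le M_{\mathbf b}(W)$. Let $r=r_{\mathbf b}(W)$ and $s=r_{\mathbf b}(P)$, and set $B_t=\langle b_1,\dots,b_t\rangle$. Since $P\le W$, we have $\dim(P+B_t)\le\dim(W+B_t)$ for all $t$. The function $t\mapsto\dim(W+B_t)$ starts at $j$, increases by at most one per step, and first reaches $2j+1$ at $t=r$. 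Likewise $t\mapsto\dim(P+B_t)$ first reaches $2j-1$ at $t=s$. This gives two facts:
\begin{itemize}
\item $\dim(W+B_s)\ge\dim(P+B_s)=2j-1$, which does not immediately bound $s$ against $r$. However, at $t=r$ the space $W+B_r$ has dimension $2j+1$, so $\dim(P+B_r)\ge 2j+1-1=2j$, because $W+B_r=(P+B_r)+W$ and $W/P$ is one-dimensional.
\item Since $2j\ge 2j-1$ and the dimension grows in unit steps, $\dim(P+B_t)$ reaches $2j-1$ at some $t\le r$. Hence $s\le r$.
\end{itemize}
Therefore
\[
 M_{\mathbf b}(P)=P+B_s\le W+B_r=M_{\mathbf b}(W).
\]
This gives $z\in C_j(M)$, so $z\in Z_j(M)$, and completes the induction. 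Well-definedness of the recursion is automatic along the way, because each $c_{\mathbf b,j}$ is defined using only $c_{\mathbf b,j-1}$.
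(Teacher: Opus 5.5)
Your proof is correct and follows the paper's argument essentially step for step: induction on $j$; the unit-step prefix-dimension argument showing $M_{\mathbf b}(P)\le M_{\mathbf b}(W)$ for every hyperplane $P<W$; the cycle property of $z$ from the inductive hypothesis together with $\partial^2=0$; and the incidence commutator in the $(2j+1)$-space $M$. One small correction of emphasis: the recursion is well defined not merely because $c_{\mathbf b,j}$ depends only on $c_{\mathbf b,j-1}$, but because of the containment $z\in C_j(M)$ you prove, which is what lets $d_j^M$ be applied to $z$ (this is how the paper phrases it).
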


\begin{proof}
We argue by induction on $j$. For $j=0$, we have
$M_{\mathbf b}(\langle0\rangle)=\langle b_1\rangle$ and
$c_{\mathbf b,0}[\langle0\rangle]=[\langle b_1\rangle]$, whose
boundary is $[\langle0\rangle]$. Thus the degree-zero cone identity
and the support assertion hold.

Suppose $j\ge1$. Fix $W\in\Gr_j(V)$, let $U<W$ be a hyperplane, and
write $r=r_{\mathbf b}(W)$. Then
$\dim(U+\langle b_1,\ldots,b_r\rangle)\ge2j$. The dimension of
$U+\langle b_1,\ldots,b_s\rangle$ increases by at most one when $s$
increases by one. Therefore the first prefix for which it has
dimension $2j-1$ has length at most $r$, and hence
$M_{\mathbf b}(U)\le M_{\mathbf b}(W)$. The induction hypothesis
therefore shows that
$c_{\mathbf b,j-1}(\partial[W])$ is supported in
$M_{\mathbf b}(W)$, so the recursive definition is well defined and
has the asserted support.

Put $M=M_{\mathbf b}(W)$ and
$z_W=[W]-c_{\mathbf b,j-1}(\partial[W])$. By the preceding support
argument, $z_W\in C_j(M)$, while the induction hypothesis and
$\partial^2=0$ give $\partial z_W=0$. Since $\dim M=2j+1$, the
incidence commutator gives
\[
\begin{aligned}
 \partial c_{\mathbf b,j}[W]
 &=
 q^{-j}\partial d_j^M z_W \\
 &=
 q^{-j}\bigl(d_{j-1}^M\partial z_W+q^jz_W\bigr) \\
 &=
 z_W
 =
 [W]-c_{\mathbf b,j-1}(\partial[W]).
\end{aligned}
\]
Thus the cone identity holds on $[W]$, and linearity completes the
induction.
\end{proof}

In particular, $(c_{\mathbf b,j})_{j=0}^k$ is a cone through degree
$k$ whenever $2k+1\le n$.

\begin{lemma}[Equivariance]\label{lem:cone-equivariance}
Let $g\in\operatorname{GL}(V)$ and write
$g\mathbf b=(gb_1,\ldots,gb_n)$. For every $j\ge0$ with
$2j+1\le n$ and every $x\in C_j(V)$,
\[
 c_{g\mathbf b,j}(gx)=g\,c_{\mathbf b,j}(x).
\]
\end{lemma}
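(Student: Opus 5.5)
We argue by induction on $j$, after first recording how $g$ interacts with each ingredient of the recursion. Here $g$ acts on chains by $g[U]=[gU]$, extended linearly. Since $g$ is a linear automorphism of $V$, it carries $\Gr_i(X)$ bijectively onto $\Gr_i(gX)$ and preserves incidence and dimension. Three consequences follow directly.
\begin{enumerate}
\item $\partial$ commutes with $g$, i.e. $\partial(gx)=g\,\partial x$.
\item For every subspace $M$ and every $y\in C_j(M)$ we have $d_j^{gM}(gy)=g\,d_j^{M}y$, because the $(j+1)$-spaces $Y$ with $gU<Y\le gM$ are exactly the images $gY'$ of the $(j+1)$-spaces $Y'$ with $U<Y'\le M$.
\item $M_{g\mathbf b}(gW)=g\,M_{\mathbf b}(W)$. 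Indeed, $gW+\langle gb_1,\ldots,gb_s\rangle=g\bigl(W+\langle b_1,\ldots,b_s\rangle\bigr)$ has the same dimension as $W+\langle b_1,\ldots,b_s\rangle$ for every $s$, so $r_{g\mathbf b}(gW)=r_{\mathbf b}(W)$, and the two prefix sums correspond under $g$.
\end{enumerate}

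By linearity it suffices to prove the identity for $x=[W]$ with $W\in\Gr_j(V)$. The case $j=-1$ is trivial since $c_{-1}=0$. For $j=0$ we have $c_{g\mathbf b,0}[\langle0\rangle]=[\langle gb_1\rangle]=g[\langle b_1\rangle]$, which is the claim; alternatively, the general step below covers $j=0$ as well. Assume the statement holds in degree $j-1$. Set $z=[W]-c_{\mathbf b,j-1}(\partial[W])$. Using (1) and the induction hypothesis,
\[
[gW]-c_{g\mathbf b,j-1}(\partial[gW])=g[W]-c_{g\mathbf b,j-1}(g\,\partial[W])=g[W]-g\,c_{\mathbf b,j-1}(\partial[W])=gz.
\]
By Proposition~\ref{prop:cone-identity}, $z$ is supported in $M=M_{\mathbf b}(W)$, so $gz\in C_j(gM)$. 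Applying (3) and then (2), we get
\[
c_{g\mathbf b,j}[gW]=q^{-j}d_j^{gM}(gz)=g\,q^{-j}d_j^{M}z=g\,c_{\mathbf b,j}[W].
\]
This completes the induction.

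Nothing here is a real obstacle. The only point needing care is (3), namely that the stopping index $r$ and the ambient space $M_{\mathbf b}(W)$ are transported correctly by $g$; (3) handles exactly this.
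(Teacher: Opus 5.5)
Your proof is correct and follows the same route as the paper. You show $M_{g\mathbf b}(gW)=gM_{\mathbf b}(W)$ by transporting the defining prefix, use that $\partial$ and the relative coboundaries $d_j^{M}$ commute with $g$, and induct on $j$ through the recursive definition. You also spell out details the paper leaves implicit, such as the support check that $z\in C_j(M_{\mathbf b}(W))$, which is needed for $d_j^{M}$ to apply.
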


\begin{proof}
For every $W\le V$, the defining prefix for $gW$ with respect to
$g\mathbf b$ is the image of the defining prefix for $W$. Thus
$M_{g\mathbf b}(gW)=gM_{\mathbf b}(W)$. The coboundary and boundary
operators commute with $g$, and the claim follows by induction from
the recursive definition.
\end{proof}

\subsection{Homological consequences}

Fix a nondegenerate symmetric $\F_q$-bilinear form on $V$. For
$x=\sum_Ua_U[U]\in C_j(V)$, put 
\[
 x^\perp
 =
 \sum_Ua_U\,\delta_{U^\perp}
 \in C^{n-j}(V).
\]

\begin{lemma}[Orthogonal-complement duality]
\label{lem:orthogonal-complement-duality}
For every $0\le j\le n$, the map $x\mapsto x^\perp$ is a
support-preserving isomorphism from $C_j(V)$ to $C^{n-j}(V)$ and
satisfies
\[
 (\partial_jx)^\perp=d_{n-j}(x^\perp).
\]
Consequently,
$H_j(V)\cong H^{n-j}(V)$.
\end{lemma}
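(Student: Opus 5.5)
The plan is to check everything directly on basis elements, using the fact that orthogonal complementation $U\mapsto U^\perp$ is an inclusion-reversing bijection from $\Gr_j(V)$ onto $\Gr_{n-j}(V)$. This holds because the form is nondegenerate: $\dim U^\perp=n-\dim U$ and $(U^\perp)^\perp=U$.

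\textbf{Isomorphism and support.} Since $x\mapsto x^\perp$ sends the basis element $[U]$ to the coordinate cochain $\delta_{U^\perp}$, and $U\mapsto U^\perp$ is a bijection $\Gr_j(V)\to\Gr_{n-j}(V)$, the map carries a basis to a basis. It is therefore a linear isomorphism $C_j(V)\to C^{n-j}(V)$. By construction the coefficient of $\delta_{U^\perp}$ in $x^\perp$ equals the coefficient $a_U$ of $[U]$ in $x$, so $|x^\perp|=|x|$ and supports correspond bijectively.

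\textbf{The intertwining identity.} It suffices to verify $(\partial_j[U])^\perp=d_{n-j}(\delta_{U^\perp})$ for each $U\in\Gr_j(V)$.
\begin{itemize}
\item The left side is $\sum_{W<U,\ \dim W=j-1}\delta_{W^\perp}$.
\item For the right side, $(d_{n-j}\delta_{U^\perp})([Y])=\delta_{U^\perp}(\partial[Y])$ for $Y\in\Gr_{n-j+1}(V)$. This equals $1$ if $U^\perp<Y$ and $0$ otherwise, since each codimension-one subspace of $Y$ appears once in $\partial[Y]$. Hence the right side is $\sum_{Y>U^\perp}\delta_Y$, summed over $(n-j+1)$-spaces $Y$.
\item Setting $Y=W^\perp$, we have $W<U$ if and only if $W^\perp>U^\perp$, and $\dim W^\perp=n-j+1$ exactly when $\dim W=j-1$. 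So the two sums agree term by term.
\end{itemize}
The edge cases follow from the conventions. When $j=0$, $\partial_0=0$, and on the other side there are no $(n+1)$-spaces, so $d_n=0$. When $j=n$, $\partial_n[V]$ consists of all hyperplanes, and $d_0\delta_{\{0\}}$ consists of all lines, which are exactly the complements of those hyperplanes.

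\textbf{Homology.} The identity says that the family of maps $\perp$ gives an isomorphism of complexes from $(C_\bullet,\partial)$ to $(C^{n-\bullet},d)$, with the index reversed. Consequently $\perp$ maps $Z_j=\ker\partial_j$ onto $\ker d_{n-j}=Z^{n-j}$. Applying the identity in degree $j+1$, it maps $B_j=\partial_{j+1}C_{j+1}$ onto $d_{n-j-1}C^{n-j-1}=B^{n-j}$. This uses that $\perp$ is surjective on $C_{j+1}\to C^{n-j-1}$. Passing to quotients gives $H_j(V)\cong H^{n-j}(V)$.

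There is no real obstacle here. The only care needed is bookkeeping of indices, namely that $\dim W^\perp=n-j+1$ matches the degree of $d_{n-j}$, and the degenerate degrees $j=0$ and $j=n$, which the conventions $\partial_0=0$ and $d_{-1}=0$ handle.
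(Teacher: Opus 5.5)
Your proposal is correct and follows the paper's route: check $(\partial_j[U])^\perp=d_{n-j}(\delta_{U^\perp})$ on basis elements using that $U\mapsto U^\perp$ is an inclusion-reversing bijection $\Gr_j(V)\to\Gr_{n-j}(V)$, then carry cycles and boundaries to cocycles and coboundaries. You are simply more explicit than the paper about the edge degrees $j=0$ and $j=n$, and about why boundaries map onto coboundaries (surjectivity of $\perp$ in degree $j+1$).
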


\begin{proof}
Orthogonal complementation is a bijection from $\Gr_j(V)$ to
$\Gr_{n-j}(V)$. If $T<U$ has codimension one, then
$U^\perp<T^\perp$ has codimension one, and every coface of
$U^\perp$ arises uniquely in this way. This proves the displayed
identity on basis elements. It also shows that cycles correspond to
cocycles and boundaries to coboundaries, yielding the claimed
isomorphism.
\end{proof}

\begin{theorem}[Homology away from the middle dimension]
\label{thm:homology-away-from-middle}
If $\dim V=2m$, then
\[
 H_t(V)=0
 \qquad\text{for every }t\ne m.
\]
\end{theorem}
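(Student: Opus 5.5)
The plan is to treat the two ranges $t<m$ and $t>m$ separately. The cone handles the low degrees directly, and orthogonal-complement duality transfers the result to the high degrees.

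For $t<m$, i.e. $t\le m-1$, we have $2t+1\le 2m-1<n=2m$. Hence the cone maps $c_{\mathbf b,j}$ are defined for all $0\le j\le t$, and Proposition~\ref{prop:cone-identity} gives $\partial c_{\mathbf b,t}+c_{\mathbf b,t-1}\partial=I$ on $C_t(V)$. Applying this to $z\in Z_t(V)$ yields $z=\partial c_{\mathbf b,t}(z)\in B_t(V)$, so $H_t(V)=0$. The case $t=0$ is included: $\partial_0=0$, and $c_{\mathbf b,0}$ fills $[\langle0\rangle]$. Degrees $t>n$ are trivial because the chain groups vanish.

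For $m<t\le n$, we use Lemma~\ref{lem:orthogonal-complement-duality}, which gives $H_t(V)\cong H^{n-t}(V)$ with $s:=n-t<m$. It then suffices to show $H^s(V)=0$ for $s<m$. Since $K$ is a field and all spaces are finite-dimensional, universal coefficients over a field gives $H^s(V)\cong \operatorname{Hom}_K(H_s(V),K)$. Equivalently, $\dim H^s=\dim Z^s-\dim B^s=\dim C_s-\operatorname{rank} d_s-\operatorname{rank} d_{s-1}$. Because $d_k$ is the transpose of $\partial_{k+1}$, this equals $\dim C_s-\operatorname{rank}\partial_{s+1}-\operatorname{rank}\partial_s=\dim H_s$. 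By the first case, $H_s(V)=0$, so $H_t(V)=0$.

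The only step needing care is the range check for the cone: the cone exists through degree $k$ exactly when $2k+1\le n$, which is $k\le m-1$ for $n=2m$. This covers precisely the degrees below the middle, and duality reflects them to the degrees above. The argument correctly says nothing about $t=m$. The identification of cohomology with homology dimensions is routine linear algebra, since it relies only on the rank of a matrix equalling the rank of its transpose.
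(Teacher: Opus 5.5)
Your proof is correct and follows the paper's argument. The cone identity fills every cycle below the middle degree. Above the middle, orthogonal-complement duality followed by the identification $\dim H^s(V)=\dim H_s(V)$ over a field finishes the job, and your rank-of-the-transpose computation spells out the universal-coefficient step that the paper only cites.
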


\begin{proof}
Let $t<m$ and let $z\in Z_t(V)$. Since $2t+1\le2m-1$, the cone map
is defined, and Proposition~\ref{prop:cone-identity} gives
$z=\partial c_{\mathbf b,t}(z)$. Thus $H_t(V)=0$.

If $t>m$, Lemma~\ref{lem:orthogonal-complement-duality} gives
$H_t(V)\cong H^{2m-t}(V)$. By the universal coefficient theorem (as we work over a field $K$), $H^{2m-t}\cong H_{2m-t}$ which is trivial as $2m-t<m$ for $t>m$.
\end{proof}

\subsection{Coboundary expansion}
\label{subsec:coboundary-expansion}

For $\alpha\in C^j(V)$, define its distance from the coboundaries by
\[
 \operatorname{dist}\bigl(\alpha,B^j(V)\bigr)
 =
 \min_{\gamma\in B^j(V)}|\alpha-\gamma|
 =
 \min_{\beta\in C^{j-1}(V)}|\alpha-d_{j-1}\beta|.
\]
For $0\le j<n$, the $j$-dimensional coboundary expansion constant is
\[
 h_{j,n}
 =
 \min_{\alpha\in C^j(V)\setminus B^j(V)}
 \frac{|d_j\alpha|}
 {\operatorname{dist}\bigl(\alpha,B^j(V)\bigr)}.
\]

For an ordered basis $\mathbf b$ and $j\ge0$ with $2j+1\le n$,
define
\[
 \iota_{\mathbf b,j}:C^{j+1}(V)\longrightarrow C^j(V),
 \qquad
 (\iota_{\mathbf b,j}\varphi)(x)
 =
 \varphi(c_{\mathbf b,j}(x)).
\]

\begin{lemma}[Cochain contraction]\label{lem:cochain-contraction}
If $n\ge1$, then
\[
 \iota_{\mathbf b,0}d_0=I_{C^0(V)}.
\]
For every $j\ge1$ with $2j+1\le n$,
\[
 \iota_{\mathbf b,j}d_j
 +
 d_{j-1}\iota_{\mathbf b,j-1}
 =
 I_{C^j(V)}.
\]
\end{lemma}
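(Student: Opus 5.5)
This is a dualization of the cone identity in Proposition~\ref{prop:cone-identity}, so the plan is to pair every cochain with an arbitrary chain and move all operators onto the chain side. By definition, $d_j$ is the transpose of $\partial_{j+1}$, so $(d_j\psi)(x)=\psi(\partial_{j+1}x)$ for $\psi\in C^j(V)$ and $x\in C_{j+1}(V)$. Likewise, $\iota_{\mathbf b,j}$ is the transpose of $c_{\mathbf b,j}$. Transposing reverses the order of composition, so the cochain identity is the transpose of the chain identity $\partial c_{\mathbf b,j}+c_{\mathbf b,j-1}\partial=I$ on $C_j(V)$. We prove it by evaluating both sides on basis chains.

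For the first assertion, fix $\varphi\in C^0(V)$ and $x\in C_0(V)$. We compute
\[
(\iota_{\mathbf b,0}d_0\varphi)(x)=(d_0\varphi)(c_{\mathbf b,0}x)=\varphi(\partial c_{\mathbf b,0}x).
\]
The hypothesis $n\ge1$ ensures $2\cdot0+1\le n$, so $c_{\mathbf b,0}$ is defined. Proposition~\ref{prop:cone-identity} with $j=0$ and $c_{\mathbf b,-1}=0$ gives $\partial c_{\mathbf b,0}x=x$. Hence $(\iota_{\mathbf b,0}d_0\varphi)(x)=\varphi(x)$, as required.

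For $j\ge1$ with $2j+1\le n$, fix $\varphi\in C^j(V)$ and $x\in C_j(V)$. Since $2(j-1)+1<n$, the map $c_{\mathbf b,j-1}$ is also defined. Expanding each term gives
\[
(\iota_{\mathbf b,j}d_j\varphi)(x)=\varphi(\partial c_{\mathbf b,j}x),
\qquad
(d_{j-1}\iota_{\mathbf b,j-1}\varphi)(x)=(\iota_{\mathbf b,j-1}\varphi)(\partial x)=\varphi(c_{\mathbf b,j-1}\partial x).
\]
Adding these and applying Proposition~\ref{prop:cone-identity} yields
\[
\varphi\bigl(\partial c_{\mathbf b,j}x+c_{\mathbf b,j-1}\partial x\bigr)=\varphi(x).
\]
Since this holds for all $x\in C_j(V)$, the cochain identity follows.

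There is no real obstacle here. The only points to check are that the degree ranges make each $c_{\mathbf b,\cdot}$ involved well defined, and that the signs and orientation of the transposes are right. No signs appear, because the identity has plus signs throughout.
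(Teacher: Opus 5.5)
Your proposal is correct and follows essentially the same argument as the paper: evaluate both sides on an arbitrary chain $x$, use that $d_j$ and $\iota_{\mathbf b,j}$ are the transposes of $\partial_{j+1}$ and $c_{\mathbf b,j}$, and apply the cone identity of Proposition~\ref{prop:cone-identity}. Your check that $c_{\mathbf b,j-1}$ is defined whenever $2j+1\le n$ is a useful detail that the paper leaves implicit.
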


\begin{proof}
For $j=0$, the assertion is dual to
$\partial c_{\mathbf b,0}=I_{C_0(V)}$. Let $j\ge1$. For
$\alpha\in C^j(V)$ and $x\in C_j(V)$, the left-hand side,
evaluated at $x$, equals
\[
 \alpha\bigl(\partial c_{\mathbf b,j}(x)\bigr)
 +
 \alpha\bigl(c_{\mathbf b,j-1}(\partial x)\bigr)
 =
 \alpha(x)
\]
by Proposition~\ref{prop:cone-identity}.
\end{proof}

Put $[0]_q!=1$ and
$[j]_q!=\prod_{r=1}^j[r]_q$, as ordinary integers. Define
\[
 \kappa_0=1,
 \qquad
 \kappa_j=[j+1]_q\bigl(1+[j]_q\kappa_{j-1}\bigr)
 \quad(j\ge1).
\]

\begin{proposition}[Cone size]\label{prop:cone-size}
For every ordered basis $\mathbf b$, every $j$ with $2j+1\le n$,
and every $W\in\Gr_j(V)$,
\[
 |c_{\mathbf b,j}[W]|\le\kappa_j.
\]
Moreover,
\[
 \kappa_j
 =
 [j+1]_q[j]_q!^2
 \left(
 1+\sum_{r=1}^j\frac{1}{[r]_q!^2}
 \right).
\]
In particular, $\kappa_j$ bounds the cone sizes in every degree at most
$j$.
\end{proposition}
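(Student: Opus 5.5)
We argue by induction on $j$, directly from the recursive definition
\[
 c_{\mathbf b,j}[W]=q^{-j}d_j^{M}\bigl([W]-c_{\mathbf b,j-1}(\partial[W])\bigr),\qquad M=M_{\mathbf b}(W).
\]
For $j=0$, $c_{\mathbf b,0}[\langle0\rangle]=[\langle b_1\rangle]$ has support $1=\kappa_0$.

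For $j\ge1$, put $z_W=[W]-c_{\mathbf b,j-1}(\partial[W])$. Since $\partial[W]$ is a sum of $[j]_q$ basis elements (the hyperplanes of $W$), linearity and the induction hypothesis give
\[
 |z_W|\le1+[j]_q\kappa_{j-1}.
\]
This uses only the triangle inequality $|x+y|\le|x|+|y|$ for supports. Next, $q^{-j}$ is a unit, so it does not change supports. Moreover $d_j^M$ sends each basis element $[U]$, with $U\in\Gr_j(M)$ and $\dim M=2j+1$, to the sum of its cofaces in $M$. There are $[2j+1-j]_q=[j+1]_q$ of these. Hence
\[
 |d_j^Mz_W|\le[j+1]_q|z_W|\le[j+1]_q\bigl(1+[j]_q\kappa_{j-1}\bigr)=\kappa_j,
\]
which closes the induction. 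The supports of $c_{\mathbf b,j-1}$ on the hyperplanes of $W$ lie in $M$ by Proposition~\ref{prop:cone-identity}, so $z_W\in C_j(M)$ and the coface count in $M$ is the right one.

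For the closed form, we unroll the recursion. We claim that $\kappa_j/([j+1]_q[j]_q!^2)=1+\sum_{r=1}^j[r]_q!^{-2}$, and verify it by induction. Write $A_j=1+\sum_{r=1}^j[r]_q!^{-2}$, so that the claim is $\kappa_j=[j+1]_q[j]_q!^2A_j$. The case $j=0$ is clear. Assuming the claim for $j-1$, we compute
\[
 \kappa_j=[j+1]_q\bigl(1+[j]_q\cdot[j]_q[j-1]_q!^2A_{j-1}\bigr)=[j+1]_q\bigl(1+[j]_q!^2A_{j-1}\bigr).
\]
This equals $[j+1]_q[j]_q!^2\bigl(A_{j-1}+[j]_q!^{-2}\bigr)=[j+1]_q[j]_q!^2A_j$, as required.

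Finally, $\kappa_j$ is increasing in $j$. Indeed $\kappa_j\ge[j+1]_q[j]_q\kappa_{j-1}\ge\kappa_{j-1}$, which gives the ``in particular'' statement. No step is a real obstacle. The only point needing care is that the recursion is applied only to the $[j]_q$ hyperplanes of $W$, so the multiplier is $[j]_q$ and not something larger. We do not exploit cancellation, so the bound is crude but exact to the recursion defining $\kappa_j$.
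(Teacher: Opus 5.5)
Your proposal is correct and follows the paper's proof: the same induction on the recursion, with $1+[j]_q\kappa_{j-1}$ bounding $|z_W|$ and at most $[j+1]_q$ cofaces in the $(2j+1)$-space $M_{\mathbf b}(W)$, followed by unrolling the recurrence and noting monotonicity. You also write out the closed-form induction and the monotonicity step, which the paper leaves as routine.
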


\begin{proof}
The assertion is immediate for $j=0$. Suppose it holds in degree
$j-1$. The support of $\partial[W]$ has size $[j]_q$, and every
$j$-space in the $(2j+1)$-space $M_{\mathbf b}(W)$ has
$[j+1]_q$ cofaces there. Hence
\[
 |c_{\mathbf b,j}[W]|
 \le
 [j+1]_q\bigl(1+[j]_q\kappa_{j-1}\bigr)
 =\kappa_j.
\]
The closed form follows from the recurrence by induction. The same
recurrence shows that the sequence $(\kappa_j)_{j\ge0}$ is increasing.
\end{proof}

\begin{theorem}[Coboundary expansion]
\label{thm:grassmann-coboundary-expansion}
Let $1\le k<n/2$. Suppose that
$|c_{\mathbf b,j}[W]|\le\Lambda$ for every ordered basis $\mathbf b$,
every $0\le j\le k$, and every $W\in\Gr_j(V)$. Then
\[
 h_{k,n}
 \ge
 \frac{[n-k]_q}{[k+1]_q}\frac1\Lambda,
 \qquad
 h_{n-k,n}\ge\frac1\Lambda.
\]
In particular, one may take $\Lambda=\kappa_k$ from
Proposition~\ref{prop:cone-size}.
\end{theorem}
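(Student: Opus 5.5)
The plan is to run the Gromov/Linial--Meshulam cone argument twice. For the degree-$k$ bound we use the cochain contraction of Lemma~\ref{lem:cochain-contraction}, averaged over bases using the $\operatorname{GL}(V)$-equivariance of Lemma~\ref{lem:cone-equivariance}. For the degree-$(n-k)$ bound we use orthogonal-complement duality (Lemma~\ref{lem:orthogonal-complement-duality}), which turns the question into a chain-filling statement that a single cone handles directly, with no averaging. Since $k<n/2$ we have $2k+1\le n$, so the cone maps $c_{\mathbf b,j}$ and the contractions $\iota_{\mathbf b,j}$ exist for all $0\le j\le k$.

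\emph{Degree $k$.} Let $\alpha\in C^k(V)\setminus B^k(V)$ and fix any ordered basis $\mathbf b$. Since $k\ge1$, Lemma~\ref{lem:cochain-contraction} gives
\[
\alpha-d_{k-1}(\iota_{\mathbf b,k-1}\alpha)=\iota_{\mathbf b,k}(d_k\alpha).
\]
Hence $\operatorname{dist}(\alpha,B^k)\le|\iota_{\mathbf b,k}\varphi|$ with $\varphi=d_k\alpha$, for every $\mathbf b$, and so it is also at most the average of $|\iota_{\mathbf b,k}\varphi|$ over a uniformly random ordered basis $\mathbf b$. Now $(\iota_{\mathbf b,k}\varphi)[W]\ne0$ forces $\supp\varphi\cap\supp c_{\mathbf b,k}[W]\ne\emptyset$. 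Therefore
\[
\mathbb E_{\mathbf b}|\iota_{\mathbf b,k}\varphi|
\le\sum_{Y\in\supp\varphi} N(Y),
\qquad
N(Y):=\mathbb E_{\mathbf b}\bigl|\{W\in\Gr_k(V):Y\in\supp c_{\mathbf b,k}[W]\}\bigr|.
\]
The key point is that $N(Y)$ does not depend on $Y$. By Lemma~\ref{lem:cone-equivariance}, $Y\in\supp c_{\mathbf b,k}[W]$ if and only if $gY\in\supp c_{g\mathbf b,k}[gW]$. The uniform distribution on ordered bases is $\operatorname{GL}(V)$-invariant, and $\operatorname{GL}(V)$ acts transitively on $\Gr_{k+1}(V)$, so $N(gY)=N(Y)$ for all $g$. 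Averaging over $Y$ and using $|c_{\mathbf b,k}[W]|\le\Lambda$ gives
\[
N(Y)=\frac{1}{|\Gr_{k+1}(V)|}\,\mathbb E_{\mathbf b}\sum_{W}|c_{\mathbf b,k}[W]|
\le\Lambda\,\frac{|\Gr_k(V)|}{|\Gr_{k+1}(V)|}
=\Lambda\,\frac{[k+1]_q}{[n-k]_q}.
\]
The last equality is the standard Gaussian-binomial ratio. Combining these bounds yields $\operatorname{dist}(\alpha,B^k)\le\Lambda\frac{[k+1]_q}{[n-k]_q}|d_k\alpha|$, which is the first inequality.

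\emph{Degree $n-k$.} Write $\alpha=x^\perp$ with $x\in C_k(V)$. By Lemma~\ref{lem:orthogonal-complement-duality}, which is support-preserving and intertwines $\partial$ with $d$, we have $|d_{n-k}\alpha|=|\partial_k x|$ and $\operatorname{dist}(\alpha,B^{n-k})=\min_\beta|x-\partial_{k+1}\beta|$. Take $\beta=c_{\mathbf b,k}(x)$ for any $\mathbf b$. Proposition~\ref{prop:cone-identity} gives $x-\partial\beta=c_{\mathbf b,k-1}(\partial x)$. Each basis element of $\supp(\partial x)$ contributes at most $\Lambda$ to the support, since $k-1\le k$, so the support has size at most $\Lambda|\partial x|$. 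This proves $h_{n-k,n}\ge1/\Lambda$. The final remark ($\Lambda=\kappa_k$) follows from Proposition~\ref{prop:cone-size} and the monotonicity of $\kappa_j$.

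The main obstacle is the averaging step in degree $k$, namely justifying that $N(Y)$ is independent of $Y$. This requires the equivariance of the cone under the simultaneous action on bases and subspaces, together with transitivity of $\operatorname{GL}(V)$ on $\Gr_{k+1}(V)$. Everything else is bookkeeping.
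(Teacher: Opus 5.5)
Your proposal is correct and follows essentially the same route as the paper. In degree $k$ you use the cochain contraction averaged over ordered bases, with $N(Y)$ made constant by equivariance and the transitivity of $\operatorname{GL}(V)$ on $\Gr_{k+1}(V)$; in degree $n-k$ you use orthogonal-complement duality and the cone identity applied to $c_{\mathbf b,k}(x)$. The only differences are cosmetic: you average with an expectation instead of summing over $\mathcal B(V)$, and you bound the distance directly instead of first picking a minimum-support representative.
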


\begin{proof}
Let $\mathcal B(V)$ be the set of ordered bases of $V$. Since $d_k$
vanishes on $B^k(V)$, it is constant on every coset of $B^k(V)$.
Fix a nonzero coset in $C^k(V)/B^k(V)$, and choose a representative
$\alpha$ of minimum support. Thus
\[
 |\alpha|
 =
 \operatorname{dist}\bigl(\alpha,B^k(V)\bigr).
\]
For every $\mathbf b\in\mathcal B(V)$,
Lemma~\ref{lem:cochain-contraction} gives
\[
 \alpha-d_{k-1}\iota_{\mathbf b,k-1}\alpha
 =
 \iota_{\mathbf b,k}d_k\alpha.
\]
The left-hand side represents the same coset as $\alpha$. Hence the
minimality of $\alpha$ implies
\[
 |\alpha|\le|\iota_{\mathbf b,k}d_k\alpha|.
\]

We next relate the support on the right to the cones. For every
$W\in\Gr_k(V)$,
\[
 (\iota_{\mathbf b,k}d_k\alpha)([W])
 =
 (d_k\alpha)(c_{\mathbf b,k}[W]).
\]
If this value is nonzero, then some member of
$\supp(c_{\mathbf b,k}[W])$ has a nonzero coefficient in
$d_k\alpha$. Possible cancellations can only make the support of the
contracted cochain smaller. Therefore
\[
 |\iota_{\mathbf b,k}d_k\alpha|
 \le
 \sum_{W\in\Gr_k(V)}
 \mathbf 1_{\{
 \supp(c_{\mathbf b,k}[W])
 \cap\supp(d_k\alpha)\ne\varnothing\}}.
\]
Summing over all $\mathbf b \in \mathcal B(V) $
\[
 |\mathcal B(V)|\,|\alpha|
 \le
 \sum_{\mathbf b\in\mathcal B(V)}
 \sum_{W\in\Gr_k(V)}
 \mathbf 1_{\{
 \supp(c_{\mathbf b,k}[W])
 \cap\supp(d_k\alpha)\ne\varnothing\}}.
\]

For $U\in\Gr_{k+1}(V)$, let
\[
 N(U)
 =
 \bigl|\{(\mathbf b,W)\in\mathcal B(V)\times\Gr_k(V):
 U\in\supp(c_{\mathbf b,k}[W])\}\bigr|.
\]
Lemma~\ref{lem:cone-equivariance} and the transitive action of
$\operatorname{GL}(V)$ on $\Gr_{k+1}(V)$ show that $N(U)$ is
independent of $U$; denote its common value by $N$. Double counting
the intersections with $\supp(d_k\alpha)$ gives
\begin{align*}
 &\sum_{\mathbf b\in\mathcal B(V)}
 \sum_{W\in\Gr_k(V)}
 \mathbf 1_{\{
 \supp(c_{\mathbf b,k}[W])
 \cap\supp(d_k\alpha)\ne\varnothing\}}\\
 &\qquad\le
 \sum_{U\in\supp(d_k\alpha)}N(U)
 =
 N|d_k\alpha|.
\end{align*}
On the other hand, counting all incidences between cones and
$(k+1)$-spaces yields
\begin{align*}
 |\Gr_{k+1}(V)|N
 &=
 \sum_{U\in\Gr_{k+1}(V)}N(U)\\
 &=
 \sum_{\mathbf b\in\mathcal B(V)}
 \sum_{W\in\Gr_k(V)}
 |c_{\mathbf b,k}[W]|\\
 &\le
 |\mathcal B(V)|\,|\Gr_k(V)|\,\Lambda.
\end{align*}
Combining these estimates and cancelling $|\mathcal B(V)|$ gives
\[
 |\alpha|
 \le
 \frac{|\Gr_k(V)|}{|\Gr_{k+1}(V)|}
 \Lambda\,|d_k\alpha|.
\]
Since
$|\Gr_{k+1}(V)|/|\Gr_k(V)|=[n-k]_q/[k+1]_q$, this proves the first
bound.

For the second bound, fix $\alpha\in C^{n-k}(V)$ and write
$\alpha=\beta^\perp$ for the unique $\beta\in C_k(V)$. Put
$z=\partial\beta\in Z_{k-1}(V)$ and fix an ordered basis $\mathbf b$.
Since $z$ is a cycle, the cone identity gives
$\partial c_{\mathbf b,k-1}(z)=z$, so
$c_{\mathbf b,k-1}(z)$ is a filling of $z$. Applying the cone identity
to $\beta$ also gives
\[
 \beta-c_{\mathbf b,k-1}(z)
 =
 \partial c_{\mathbf b,k}(\beta).
\]
Applying orthogonal complementation and
Lemma~\ref{lem:orthogonal-complement-duality}, we obtain
\[
 \alpha-\bigl(c_{\mathbf b,k-1}(z)\bigr)^\perp
 =
 d_{n-k-1}\bigl(\bigl(c_{\mathbf b,k}(\beta)\bigr)^\perp\bigr)
 \in B^{n-k}(V).
\]
Moreover, $z^\perp=d_{n-k}\alpha$. Since orthogonal complementation
preserves support and cone size, and the cone size is at most $\Lambda$ in degree
$k-1$,
\begin{align*}
 \operatorname{dist}\bigl(\alpha,B^{n-k}(V)\bigr)
 &\le |c_{\mathbf b,k-1}(z)|\\
 &\le
 \sum_{W\in\supp(z)}|c_{\mathbf b,k-1}[W]|\\
 &\le \Lambda|z|
 =\Lambda|d_{n-k}\alpha|.
\end{align*}
\end{proof}

\subsubsection*{Expansion in dimension three}

We proceed with some explicit computations of the expansion when the ambient linear space is of dimension $3$. These computations are standalone, and are \emph{not} needed for the proof of Theorem \ref{thm:gap-stability}.

\begin{lemma}
\label{lem:weighted-projective-lines}
Let $M$ be a $3$-dimensional $\F_q$-space, let
$\mathcal L\subseteq\Gr_2(M)$, and let
$z=\sum_{L\in\mathcal L}a_L[L]$ be a nonzero $2$-chain in $M$.
If every $a_L\ne0$ and $\sum_{L\in\mathcal L}a_L=0$, then
\[
 |\partial z|\le q|\mathcal L|.
\]
\end{lemma}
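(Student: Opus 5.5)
The plan is to compare $|\partial z|$ with the number of points of $M$ covered by $\mathcal L$, and then show that the points where several lines of $\mathcal L$ meet save at least $|\mathcal L|$. Write $m=|\mathcal L|$, and for a $1$-space $P\le M$ let $t_P$ be the number of members of $\mathcal L$ containing $P$. The coefficient of $[P]$ in $\partial z$ is $\sum_{L\ni P}a_L$. So $\supp(\partial z)$ is contained in $\{P:t_P\ge1\}$. Each line has $q+1$ points, so double counting gives $\sum_P t_P=(q+1)m$. Hence
\[
 |\{P:t_P\ge1\}|=(q+1)m-\sum_P (t_P-1)_+ ,
\]
and it suffices to show that $\sum_P(t_P-1)_+$, plus the number of covered points with zero coefficient, is at least $m$. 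Since $z\ne0$, every $a_L\ne0$ and $\sum_La_L=0$, we have $m\ge2$. We split into two cases according to whether all members of $\mathcal L$ pass through a common point.

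\emph{Concurrent case.} Suppose all $m$ lines contain a common point $P_0$; this includes the case $m=2$, since two distinct lines of the plane $M$ meet. Then $t_{P_0}=m$, and every other covered point lies on exactly one line. So the covered points number $mq+1$. The coefficient of $[P_0]$ in $\partial z$ is $\sum_La_L=0$, so $P_0\notin\supp(\partial z)$. This gives $|\partial z|\le mq$.

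\emph{Non-concurrent case.} Distribute the excess over the lines by writing
\[
 \sum_P(t_P-1)_+=\sum_{L\in\mathcal L}w(L),\qquad w(L)=\sum_{P\in\Gr_1(L),\,t_P\ge2}\frac{t_P-1}{t_P}.
\]
This holds because each point with $t_P\ge2$ is shared equally among its $t_P$ lines. Fix $L$. Every other member of $\mathcal L$ meets $L$ in exactly one point, because any two distinct lines of a projective plane meet. If all these intersections occurred at a single point, every line of $\mathcal L$ would pass through it, contradicting non-concurrence. So $L$ carries at least two points with $t_P\ge2$. Each such point contributes $(t_P-1)/t_P\ge\tfrac12$, so $w(L)\ge1$. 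Summing over $L$ gives $\sum_P(t_P-1)_+\ge m$, and therefore $|\partial z|\le(q+1)m-m=qm$.

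The only delicate step is this averaging argument. The naive bound $\sum_P\binom{t_P}{2}=\binom m2$ fails for $m=2$, and it also fails for concurrent pencils, where the excess is only $m-1$. That is exactly why the hypothesis $\sum_La_L=0$ is needed: in the concurrent case it removes the common point from the support and recovers the missing unit.
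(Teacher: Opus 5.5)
Your proof is correct, and it takes a different route from the paper in the non-concurrent case. The concurrent case matches the paper: the pencil covers $1+q|\mathcal L|$ points, and $\sum_L a_L=0$ removes the common point from $\supp(\partial z)$.

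In the non-concurrent case the paper argues greedily. A family that is not a pencil contains a projective triangle, whose three lines cover $3(q+1)-3=3q$ points. Every further line meets the lines already taken, so it adds at most $q$ new points, and the union has at most $q|\mathcal L|$ points.

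You instead write the number of covered points as $(q+1)m-\sum_P(t_P-1)_+$. You then discharge the excess onto the lines with weight $(t_P-1)/t_P$ per multiple point. Non-concurrence forces each line to carry at least two multiple points, so each line receives weight at least $1$.

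What each route buys:
\begin{itemize}
\item The paper's argument is shorter and needs only the existence of a triangle together with the fact that any two lines of a projective plane meet.
\item Your argument needs no choice of triangle or ordering of the lines. It also shows line by line where the saving comes from: a line carrying $k$ multiple points contributes at least $k/2$ to the excess.
\end{itemize}

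Your closing remark about the ``naive bound'' $\sum_P\binom{t_P}{2}=\binom m2$ plays no role in the proof. That identity bounds the excess from above, not below, so you can drop the remark.
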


\begin{proof}
View the $1$-spaces of $M$ as projective points and the members of
$\mathcal L$ as projective lines. If the latter are not all
concurrent, choose three forming a projective triangle. Their union
has $3q$ points, and every further line adds at most $q$ new points.
Hence the union has at most $q|\mathcal L|$ points.

If all members of $\mathcal L$ pass through one point, their union
has $1+q|\mathcal L|$ points. The coefficient of the common point in
$\partial z$ is $\sum_La_L=0$, so $\partial z$ is supported on at
most the remaining $q|\mathcal L|$ points.
\end{proof}

\begin{proposition}[Expansion in a $3$-space]
\label{prop:three-dimensional-expansion}
If $\dim V=3$, then
\[
 h_{0,3}=[3]_q,
 \qquad
 h_{1,3}=\frac1q,
 \qquad
 h_{2,3}=1.
\]
\end{proposition}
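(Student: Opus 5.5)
We treat the three constants separately. In each case we compute the cocycle and coboundary spaces of the $3$-dimensional complex explicitly, and then optimise the ratio in the definition of $h_{j,3}$.

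\emph{The case $j=0$.} Here $C^0(V)$ is one-dimensional, spanned by $\delta_{\langle0\rangle}$, and $B^0(V)=0$. Every nonzero $\alpha=c\,\delta_{\langle0\rangle}$ therefore satisfies $\operatorname{dist}(\alpha,B^0(V))=1$. Moreover $d_0\alpha=c\sum_{P\in\Gr_1(V)}\delta_P$ has support $[3]_q$. Hence $h_{0,3}=[3]_q$.

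\emph{The case $j=2$.} The target $C^3(V)$ is spanned by $\delta_V$, and $(d_2\alpha)([V])=\sum_L\alpha([L])$. By Corollary~\ref{cor:odd-dimensional-exactness} and the universal coefficient theorem, $H^2(V)=0$. Hence $B^2(V)=Z^2(V)$ is exactly the hyperplane of $2$-cochains with coefficient sum zero. If $\alpha\notin B^2(V)$, then $|d_2\alpha|=1$. For the distance, pick any $L_0\in\Gr_2(V)$ and put $s=\sum_L\alpha([L])\neq0$. Then $\alpha-s\,\delta_{L_0}\in B^2(V)$, so the distance is exactly $1$ (it is positive since $\alpha\notin B^2$). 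Thus every ratio equals $1$, and $h_{2,3}=1$.

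\emph{The case $j=1$.} This is the main case. First, $B^1(V)=d_0C^0(V)$ consists of the multiples of the all-ones cochain $J$ on $\Gr_1(V)$. Hence
\[
\operatorname{dist}(\alpha,B^1(V))=\min_{c\in K}|\alpha-cJ|.
\]
By Lemma~\ref{lem:orthogonal-complement-duality}, we may rephrase everything on the dual side. Write $\alpha=\beta^\perp$ with $\beta\in C_2(V)$ a weighted family of projective lines. Then $J$ corresponds to $\partial_3[V]$, and $|d_1\alpha|=|\partial\beta|$. The problem becomes: minimise $|\partial\beta|/\min_c|\beta-c\,\partial_3[V]|$.

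The key normalisation uses $[3]_q\equiv1$ in $K$. It implies that there is a unique $c$, namely $c=\sum_L\beta_L$, for which $z=\beta-c\,\partial_3[V]$ has coefficient sum zero. This $z$ is nonzero and satisfies $\partial z=\partial\beta$.

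We then apply Lemma~\ref{lem:weighted-projective-lines} to $z$, in the direction it is stated. It bounds $|\partial z|$ by $q|\supp z|$, with the concurrent pencil as the extremal configuration. I expect this to pin down the extremal examples.

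\emph{Upper bound $h_{1,3}\le 1/q$.} We look for a $\beta$ whose boundary is very small while no translate $\beta-c\,\partial_3[V]$ is small. The natural candidate is a signed pencil, or its complement in the set of all lines, where cancellations at the common point and along lines make $\partial\beta$ tiny. We then verify, by checking all $c$, that every translate has support about $q$ times larger.

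\emph{Lower bound $h_{1,3}\ge 1/q$.} Given $\beta$, let $P=\supp(\partial\beta)$. We would construct a specific $c$ for which $\beta-c\,\partial_3[V]$ is supported on lines meeting $P$ in a controlled way. The idea is this. On any line $L$ avoiding $P$, the cycle equations at the $q+1$ points of $L$ hold, and summing them as in the $n=3$ part of Proposition~\ref{prop:low-dimensional-cycles} forces $\beta_L$ to equal a global constant. Thus every line outside the lines through $P$ has a common value, and we take $c$ to be that value.

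Each point of $P$ lies on only $q+1$ lines. So at most $(q+1)|P|$ lines survive, and a refinement using the zero-sum normalisation should cut this to $q|P|$. Making this refinement sharp is the step I expect to be the main obstacle. If the direct count only gives $q+1$, I would try to exploit that the coefficients at a point of $P$ cannot all be ``free'', saving one line per point.
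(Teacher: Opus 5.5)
Your arguments for $h_{0,3}$ and $h_{2,3}$ are correct and coincide with the paper's. The case $j=1$, however, is not established in either direction.

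\textbf{Lower bound.} Applying Lemma~\ref{lem:weighted-projective-lines} to $z=\beta-c\,\partial_3[V]$ yields $|d_1\alpha|=|\partial\beta|\le q|z|$. This is an \emph{upper} bound on the numerator of the expansion ratio, so it cannot give $h_{1,3}\ge 1/q$, and it does not identify extremizers.

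Your second idea is the right one, but it stops short. The summation you perform works on every line, not only on those avoiding $P=\supp(\partial\beta)$. It gives $z_L=-\sum_{x\in\Gr_1(L)}(\partial\beta)_x$, i.e.\ $z=-d_1^V(\partial\beta)$. This is the point--line dual of the paper's identity $\widehat\alpha-d_0^V(\partial_1\widehat\alpha)=-\partial_2\widehat{d_1\alpha}$ from Lemma~\ref{lem:incidence-commutator}. Counting $q+1$ lines per point of $P$ then gives only $h_{1,3}\ge 1/(q+1)$.

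The missing saving does not come from the coefficients at individual points. At $x\in P$ the only constraint is $\sum_{L\ni x}z_L=(\partial z)_x=(\partial\beta)_x\ne0$, which forces nothing to vanish. The saving comes from incidence geometry:
\begin{itemize}
\item If $P$ is not collinear, the lines through three non-collinear points of $P$ number exactly $3q$, and each further point adds at most $q$ new lines. So at most $q|P|$ lines meet $P$.
\item If $P\subseteq L_0$ for a line $L_0$, then exactly $1+q|P|$ lines meet $P$. But $z_{L_0}=-\sum_x(\partial\beta)_x=-(q+1)\sum_L\beta_L=0$.
\end{itemize}
Hence $\operatorname{dist}(\alpha,B^1(V))\le|z|\le q|d_1\alpha|$. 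This is precisely Lemma~\ref{lem:weighted-projective-lines} transported to the dual side. The paper applies it to the zero-sum chain $\widehat{d_1\alpha}$ (the dual of $\partial\beta$), not to $z$.

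\textbf{Upper bound.} Your candidates fail:
\begin{itemize}
\item The pencil $d_1^V[p]$ and its complement $\partial_3[V]-d_1^V[p]$ both have boundary supported on all $q^2+q$ points other than $p$.
\item A zero-sum signed pencil on $k$ lines has boundary of size $qk$.
\end{itemize}
The extremal example is a \emph{difference of two pencils}, $\beta=d_1^V([p_1]-[p_2])$ for distinct points $p_1,p_2$. This is dual to the paper's $\widehat\alpha=\partial[P_1]-\partial[P_2]$. It has $\partial\beta=[p_2]-[p_1]$ and $|\beta|=2q$.

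One must then check that $\min_c|\beta-c\,\partial_3[V]|=2q$:
\begin{itemize}
\item For $c\notin\{0,\pm1\}$, all $[3]_q$ lines survive.
\item For $c=\pm1$, there remain $q^2+1$ lines if $-1\ne1$ in $K$, and $q^2-q+1$ lines if $\operatorname{char}K=2$ (which forces $q\ge3$ odd). Both are at least $2q$.
\end{itemize}
This gives the ratio $2/(2q)=1/q$.
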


\begin{proof}
Since $C^0(V)=K\delta_{\langle0\rangle}$ and
$d_0\delta_{\langle0\rangle}$ is supported on every $1$-space of
$V$, we have $h_{0,3}=[3]_q$.

For a cochain $\varphi=\sum_Ua_U\delta_U$, write
$\widehat\varphi=\sum_Ua_U[U]$ for the chain with the same
coordinates. Fix $\alpha\in C^1(V)$ and put $\beta=d_1\alpha$. Each
$1$-space of $V$ has $q+1$ $2$-space cofaces, so the sum of the
coefficients of $\beta$ is zero. Lemma~\ref{lem:incidence-commutator},
in degree one and ambient dimension three, gives
\[
 \widehat\alpha-d_0^V\bigl(\partial_1\widehat\alpha\bigr)
 =-\partial_2\widehat\beta.
\]
If $\beta=0$, this identity shows that $\alpha\in B^1(V)$. Otherwise,
$d_0^V(\partial_1\widehat\alpha)$ corresponds to a coboundary, and
Lemma~\ref{lem:weighted-projective-lines} gives
\[
 \operatorname{dist}\bigl(\alpha,B^1(V)\bigr)
 \le\bigl|\partial_2\widehat\beta\bigr|
 \le q|\widehat\beta|
 =q|d_1\alpha|.
\]
Therefore $h_{1,3}\ge1/q$.

For the reverse inequality, choose distinct
$P_1,P_2\in\Gr_2(V)$ and define $\alpha\in C^1(V)$ by
$\widehat\alpha=\partial[P_1]-\partial[P_2]$.
Then $d_1\alpha=\delta_{P_2}-\delta_{P_1}$. 
Moreover,
$|\alpha|=2q$, and subtracting a
constant cochain cannot reduce its support below $2q$: the values
$1$ and $-1$ each occur on $q$ $1$-spaces, while the remaining
$q^2-q+1$ values are zero. When $q$ is odd since $[3]_q-2q=q^2-q+1\ge2q$ the cochain $\alpha$ is minimal in its cohomology class. When $q$ is even, $(q+1)1_K=0$ implies $-1 \neq 1$ in $K$, hence adding a coboundary can only eliminate $q$ spaces of the support of $\alpha$, implying that all other classes are of support size at least $[3]_q-q=q^2+1 \geq 2q$, hence $\alpha$ is minimal. Thus
$\operatorname{dist}(\alpha,B^1(V))=2q$ and $|d_1\alpha|=2$, which
proves $h_{1,3}=1/q$.

Finally, Corollary~\ref{cor:odd-dimensional-exactness} and
finite-dimensional duality give $H^2(V)=0$, and hence
$B^2(V)=\ker d_2$. If $\alpha\notin B^2(V)$, then
$d_2\alpha=\lambda\delta_V$ for some $\lambda\ne0$. For any
$P\in\Gr_2(V)$, the cochain
$\alpha-\lambda\delta_P$ lies in $\ker d_2=B^2(V)$. Hence both
$\operatorname{dist}(\alpha,B^2(V))$ and $|d_2\alpha|$ equal $1$, so
$h_{2,3}=1$.
\end{proof}

\subsection{Proof of the gap-stability theorem}

\begin{corollary}
\label{cor:correcting-codimension-one-chain}
Let $E$ be an $m$-dimensional $\F_q$-space with $m\ge3$. For every
$\beta\in C_{m-1}(E)$, there is $c\in K$ such that
\[
 |\beta-c\,\partial[E]|
 \le
 \frac{2(q+1)^2}{[m-1]_q}|\partial\beta|.
\]
\end{corollary}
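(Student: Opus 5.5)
The plan is to translate the statement into a degree-one coboundary-expansion estimate, using orthogonal-complement duality (Lemma~\ref{lem:orthogonal-complement-duality}), and then to read off the constant from Theorem~\ref{thm:grassmann-coboundary-expansion} with $k=1$.

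First, fix a nondegenerate symmetric bilinear form on $E$ and put $\alpha=\beta^\perp\in C^{1}(E)$. By Lemma~\ref{lem:orthogonal-complement-duality}, applied with $n=m$ and $j=m-1$:
\begin{itemize}
\item the map $x\mapsto x^\perp$ preserves supports, so $|\alpha|=|\beta|$;
\item it intertwines boundaries with coboundaries, so $(\partial\beta)^\perp=d_1\alpha$ and hence $|d_1\alpha|=|\partial\beta|$.
\end{itemize}
Next we identify the relevant coboundaries. We have $C^0(E)=K\delta_{\langle0\rangle}$, so $B^1(E)=K\,d_0\delta_{\langle0\rangle}$. Applying the duality with $j=m$ gives $(\partial_m[E])^\perp=d_0\bigl([E]^\perp\bigr)=d_0\delta_{\langle0\rangle}$. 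Thus the cochains $\alpha-\gamma$ with $\gamma\in B^1(E)$ are exactly the cochains $(\beta-c\,\partial[E])^\perp$ with $c\in K$. It follows that
\[
\min_{c\in K}\bigl|\beta-c\,\partial[E]\bigr|=\operatorname{dist}\bigl(\alpha,B^1(E)\bigr).
\]

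If $\alpha\in B^1(E)$, this distance is $0$ and the claim is trivial. Otherwise, the definition of $h_{1,m}$ gives
\[
\operatorname{dist}\bigl(\alpha,B^1(E)\bigr)\le h_{1,m}^{-1}\,|d_1\alpha|.
\]

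It remains to bound $h_{1,m}$ from below. The hypothesis $m\ge3$ gives $1<m/2$, so Theorem~\ref{thm:grassmann-coboundary-expansion} applies with $k=1$ and $\Lambda=\kappa_1$. This is a valid choice because Proposition~\ref{prop:cone-size} shows that $\kappa_1$ bounds the cone sizes in both degrees $0$ and $1$, the sequence $(\kappa_j)$ being increasing. From the recurrence,
\[
\kappa_1=[2]_q\bigl(1+[1]_q\kappa_0\bigr)=2(q+1).
\]
Hence
\[
h_{1,m}\ge\frac{[m-1]_q}{[2]_q}\cdot\frac{1}{2(q+1)}=\frac{[m-1]_q}{2(q+1)^2}.
\]
Combining this with the previous display gives exactly the stated bound.

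The only delicate point is the bookkeeping in the duality. One must check that the coboundary $d_0\delta_{\langle0\rangle}$ is precisely $(\partial[E])^\perp$, and that the indices match those in Theorem~\ref{thm:grassmann-coboundary-expansion}. That is, we need the first bound there, for $h_{k,n}$ with $k=1$ and $n=m$, rather than the second bound for $h_{n-k,n}$. The second bound would only give the much weaker constant $1/\Lambda$.
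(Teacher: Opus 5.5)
Your proposal is correct and follows the paper's proof essentially step for step. Like the paper, you identify $B^1(E)=K(\partial[E])^\perp$ via $d_0\delta_{\langle0\rangle}=(\partial[E])^\perp$, pass to $\beta^\perp\in C^1(E)$ with $|d_1(\beta^\perp)|=|\partial\beta|$, and apply the first bound of Theorem~\ref{thm:grassmann-coboundary-expansion} with $k=1$ and $\Lambda=\kappa_1=2(q+1)$, which gives $h_{1,m}\ge[m-1]_q/(2(q+1)^2)$. One small correction to your closing aside: the second bound is not merely weaker but inapplicable here, since using it for $h_{1,m}$ would require $k=m-1<m/2$, which fails for $m\ge3$.
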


\begin{proof}
Fix a nondegenerate symmetric $\F_q$-bilinear form on $E$. Since
$C^0(E)=K\delta_{\langle0\rangle}$ and
$d_0\delta_{\langle0\rangle}=(\partial[E])^\perp$, we have
$B^1(E)=K(\partial[E])^\perp$.
Lemma~\ref{lem:orthogonal-complement-duality} and support preservation
give
$d_1(\beta^\perp)=(\partial\beta)^\perp$ and
$|d_1(\beta^\perp)|=|\partial\beta|$.
Proposition~\ref{prop:cone-size} gives $\kappa_1=2(q+1)$, so
Theorem~\ref{thm:grassmann-coboundary-expansion} gives
$h_{1,m}\ge [m-1]_q/([2]_q\kappa_1)
=[m-1]_q/(2(q+1)^2)$.
Consequently,
\begin{align*}
 \min_{c\in K}|\beta-c\,\partial[E]|
 &=
 \operatorname{dist}\bigl(\beta^\perp,B^1(E)\bigr)\\
 &\le
 h_{1,m}^{-1}|d_1(\beta^\perp)|\\
 &\le
 \frac{2(q+1)^2}{[m-1]_q}|\partial\beta|.
\end{align*}
The inequality involving $h_{1,m}$ is trivial when
$\beta^\perp\in B^1(E)$ and otherwise follows from the definition of
$h_{1,m}$.
\end{proof}

\begin{proof}[Proof of Theorem~\ref{thm:gap-stability}]
Fix $\alpha$ as in Theorem~\ref{thm:gap-stability}, and put
$T=[d+1]_q$, $S=\supp(\alpha)$, and $\delta=|S|-T$.
By Theorem~\ref{thm:minimum-support}, $\delta\ge0$.  If $q\le10$, the hypothesis gives
$|\alpha|<T$, so there is nothing to prove. Hence assume $q>10$. Then
$\delta<(1-10/q)T$ and $|S|<2T$.
Call two $d$-spaces adjacent if their intersection has dimension
$d-1$. Corollary~\ref{cor:quotient-link-defect} suggests that in order to have small defect, the subspaces of $\alpha$ must form a near clique (in the co-dimension $1$ intersection graph, i.e. the Grassmann graph).

The key idea of the proof is to use the known fact on the Grassmann graph - its maximal cliques come in two forms, stars which for $P\in\Gr_{d-1}(V)$ the star centered at $P$ is $\mathcal S_P=\{W\in\Gr_d(V):P<W\}$, and tops, which for $E\in\Gr_{d+1}(V)$ the top of $E$ is $\mathcal T_E=\Gr_d(E)$. The goal is to show that a substantial portion of $S$ is contained in some top.

For $U\in S$, put
$\mathcal M(U)=
\{W\in S\setminus\{U\}:\dim(U\cap W)\le d-2\}$.
Corollary~\ref{cor:quotient-link-defect} gives
$|\mathcal M(U)|\le(1+2/q)\delta$.
We first show that every star is sparse. Fix $P\in\Gr_{d-1}(V)$ and put
$a_P=|S\cap\mathcal S_P|$. Consider the pairs $(U,H)$ such that
$U\in S\cap\mathcal S_P$, $H<U$, and $H\ne P$. Each $U$ contributes
$q[d-1]_q$ such pairs.

For each pair $(U,H)$, the cycle equation at $H$ supplies some
$W\in S\setminus\{U\}$ containing $H$; choose one such $W$ and assign
$(U,H)$ to it. Since $H$ and $P$ are distinct hyperplanes of $U$, we
have $H+P=U$. Consequently, $W\notin\mathcal S_P$, since otherwise
$U=H+P\le W$, forcing $U=W$.

We claim that each fixed $W\in S\setminus\mathcal S_P$ receives at most
$q+1$ pairs. Indeed, if $(U,H)$ is assigned to $W$, then
$H\cap P\le W\cap P$. Moreover, since $H$ and $P$ are distinct
hyperplanes of $U$,
\[
\dim(H\cap P)=d-2.
\]
On the other hand, $P\nleq W$ implies $\dim(W\cap P)\le d-2$. Hence
$H\cap P=W\cap P$. Thus $H$ is a hyperplane of $W$ containing the fixed
codimension-two subspace $W\cap P$. There are exactly $q+1$ such
hyperplanes, and each $H$ uniquely determines $U=H+P$. This proves the
claim.
Double counting the assigned pairs now gives
\[
a_Pq[d-1]_q\le (|S|-a_P)(q+1).
\]
Therefore, using $|S|<2T$ and $d\ge4$, we obtain
\begin{equation}\label{eq:star-sparse}
\begin{aligned}
a_P \le \frac{|S|(q+1)}{q[d-1]_q+q+1}<\frac{2T(q+1)}{q[d-1]_q}<\frac{4T}{q^2}.
\end{aligned}
\end{equation}
Here the last inequality uses $[d-1]_q\ge q^2+q+1$.

We next find a large top. Choose adjacent $U_0,U_1\in S$ which exist by the cycle condition. Put $P=U_0\cap U_1$ and $E=U_0+U_1$.
Observe that every $d$-space adjacent to both $U_0$ and $U_1$ belongs to
$\mathcal T_E\cup\mathcal S_P$. Indeed, if it does not contain $P$,
its intersections with $U_0$ and $U_1$ are distinct hyperplanes that
span it, so it is contained in $E$. It follows that
$S\setminus(\mathcal T_E\cup\mathcal S_P)
\subseteq\mathcal M(U_0)\cup\mathcal M(U_1)$.
Put $t=|S\cap\mathcal T_E|$. The nonneighbor estimate,
\eqref{eq:star-sparse}, $|S|=T+\delta$, and $\delta \leq (1-\frac{10}{q})T$ gives
\begin{equation}\label{eq:large-top}
\begin{aligned}
t
&\ge |S|-2\left(1+\frac2q\right)\delta-a_P\\
&= T+\delta-2\left(1+\frac2q\right)\delta-a_P\\
&>T-\left(1+\frac4q\right)
       \left(1-\frac{10}{q}\right)T-\frac{4T}{q^2}\\
&=\left(\frac6q+\frac{36}{q^2}\right)T
>\frac{6T}{q}.
\end{aligned}
\end{equation}

We now correct inside this top. Write $\alpha=\beta+\gamma$, where
$\beta$ is supported in $\mathcal T_E$ and $\gamma$ outside it, and
put $s=|\gamma|=|S|-t$.
Every $H \in \supp \gamma$ can cancel at most one element of $\partial \beta$ (as otherwise it forces $H \leq E$). Hence
$|\partial\beta|\le s$.

Denoting $\eta=2(q+1)^2/[d]_q$,
since $d\ge4$:
\begin{equation}\label{eq:eta-bound}
\eta
\le\frac{2(q+1)}{q^2+1}
<\frac3q.
\end{equation}
Corollary~\ref{cor:correcting-codimension-one-chain}, applied inside
$E$, gives $c\in K$ such that
$|\beta-c\,\partial[E]|\le\eta s$.

Observe that $c\ne0$. Indeed, since $s<2T$, \eqref{eq:eta-bound} gives
$\eta s<6T/q$, whereas \eqref{eq:large-top} gives $|\beta|=t>6T/q$. Hence as $|\beta-c\,\partial[E]|<\eta s<6T/q$, we have $c\ne0$.

Every member of $\mathcal T_E\setminus\supp(\beta)$ occurs in
$\beta-c\,\partial[E]$. Consequently, $T-t\le\eta s$.
Since $s=|S|-t=\delta+(T-t)$, and since \eqref{eq:eta-bound} and $q>10$ give
$\eta<1$, we obtain $s\le\delta/(1-\eta)$.

Finally, put $\zeta=\alpha-c\,\partial[E]$. This is a cycle. Its parts
inside and outside $E$ have disjoint supports, so
$|\zeta|\le(1+\eta)s\le(1+\eta)\delta/(1-\eta)$.
Recall that by assumption $\delta < (1-\frac{10}{q})T$, and since $\eta < 3/q$ we get
\[
\frac{\delta}{T}
<1-\frac{10}{q}
<1-\frac6q
<\frac{1-\eta}{1+\eta},
\]
Thus $|\zeta|<T$. Theorem~\ref{thm:minimum-support} forces $\zeta=0$, and
hence $\alpha=c\,\partial[E]$.
\end{proof}

\section{Optimal fillings in the middle degree}

Our cone construction stops in the middle degree. Nevertheless, when the ambient space is enlarged by another dimension we show that a natural external filling defined using the coboundary operator is optimal.
\begin{proposition}[Optimal external filling]
\label{prop:optimal-external-filling}
Let $U<V$ have dimensions $2k$ and $2k+1$, respectively. Let
$\alpha\in Z_k(U)$ have minimum support among the chains representing
its nonzero class in $H_k(U)$. Then every
$\beta\in C_{k+1}(V)$ with $\partial\beta=\alpha$ satisfies
\[
 |\beta|\ge q^k|\alpha|.
\]
Equality is attained by
$q^{-k}\bigl(d_k^V-d_k^U\bigr)\alpha$.
\end{proposition}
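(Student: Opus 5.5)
Fix a vector $v\in V\setminus U$, so that $V=U\oplus\langle v\rangle$. The lower bound will come from splitting the $(k+1)$-spaces of $V$ by how they meet $U$. The equality case is a direct computation with Lemma~\ref{lem:incidence-commutator}.

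\emph{Decomposition.} Write $\beta=\beta_U+\beta'$, where $\beta_U$ is supported on $\Gr_{k+1}(U)$ and $\beta'$ on the $(k+1)$-spaces $Y\not\le U$. Each such $Y$ satisfies $\dim(Y\cap U)=k$, and $Y\cap U$ is the unique $k$-face of $Y$ lying in $U$. Projecting $\partial\beta=\alpha$ onto $C_k(U)$ gives
\[
\alpha=\partial\beta_U+\rho(\beta'),\qquad \rho(\beta')=\sum_{Y}b_Y[Y\cap U].
\]
Thus $\alpha'=\rho(\beta')$ is homologous to $\alpha$ in $U$, and it is a cycle. By the minimality hypothesis, $|\rho(\beta')|\ge|\alpha|$.

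\emph{Counting fibres.} For each $W\in\supp\rho(\beta')$ we need at least $q^k$ supported $Y$ with $Y\cap U=W$. The $(k+1)$-spaces $Y\not\le U$ with $Y\cap U=W$ correspond to the points of $V/W$ outside $U/W$. There are $q^k$ of them, indexed by $Y=W+\langle v+u\rangle$ with $u\in U/W$. The crude count gives only one $Y$ per $W$. To do better, examine the components of $\partial\beta=\alpha$ on $k$-spaces not contained in $U$: they vanish, because $\alpha\in C_k(U)$. A $k$-space $Z\not\le U$ has $Z\cap U=Z'$ of dimension $k-1$. The equation at $Z$ reads
\[
\sum_{Y\supset Z,\ Y\le U\ \text{impossible}}b_Y=0,
\]
so it only involves $Y\not\le U$ containing $Z$. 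For fixed $Z'$ these equations form a $1$-cycle equation in a projective geometry on the quotient $V/Z'$ (a quotient link in the sense of Lemma~\ref{lem:link}). I plan to use these equations to show that the coefficients $\{b_Y: Y\cap U=W\}$, viewed as a function on the affine space $U/W$ of size $q^k$, satisfy a ``sum over affine lines'' type constraint. The aim is to prove that a nonzero such fibre function has support at least $q^k$, for example because it must be constant on the fibre. This would give $|\beta'|\ge q^k|\rho(\beta')|\ge q^k|\alpha|$. I expect this fibre analysis to be the main obstacle. The first thing to try is to treat $\beta'$ via the cone/commutator identity, possibly after replacing $\beta$ by $\beta-\partial(\cdot)$ terms supported outside $U$. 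If the fibres cannot be controlled directly, the fallback is an averaging argument over the $q^k$ complements $U'$ of the form $\langle v+u\rangle+\cdots$: each gives a projection of $\beta$ to a chain homologous to $\alpha$, and these projections have disjoint preimages.

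\emph{Equality.} Put $\beta_0=q^{-k}(d_k^V-d_k^U)\alpha$. Lemma~\ref{lem:incidence-commutator} applied in $V$ (dimension $2k+1$) gives $\partial d_k^V\alpha=q^k\alpha$. Applied in $U$ (dimension $2k$), it gives $\partial d_k^U\alpha=([k]_q-[k]_q)\alpha+d^U\partial\alpha=0$. Hence $\partial\beta_0=\alpha$. Moreover, $(d_k^V-d_k^U)[W]$ is the sum of the $q^k$ cofaces of $W$ not contained in $U$. Distinct $W$ have disjoint sets of such cofaces, since $W=Y\cap U$ is determined by the coface. So there is no cancellation, and $|\beta_0|=q^k|\alpha|$.
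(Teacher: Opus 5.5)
Your equality computation is correct, and so is your first reduction: $\rho(\beta')$ is homologous to $\alpha$, hence $|\rho(\beta')|\ge|\alpha|$. But that only yields $|\beta|\ge|\alpha|$. The factor $q^k$, which is the whole content of the lower bound, is left as a plan.

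The plan cannot work as stated, because the pointwise fibre claim is false for $k\ge2$. Let $\beta_0=q^{-k}(d_k^V-d_k^U)\alpha$. Choose a $(k+2)$-space $E\not\le U$ such that $E\cap U$ contains a $k$-space $W\notin\supp\alpha$; this is possible because a minimum-support $\alpha$ cannot have full support. Put $\beta=\beta_0+\partial[E]$. Then:
\begin{itemize}
\item $\partial\beta=\alpha$;
\item the fibre of $\beta'$ over $W$ consists of exactly the $q$ spaces $Y\le E$ with $Y\cap U=W$, each with coefficient $1$;
\item $\rho(\beta')$ has coefficient $q\cdot1_K=-1\ne0$ at $W$.
\end{itemize}
So a fibre over $\supp\rho(\beta')$ can have only $q<q^k$ elements, and fibre functions need not be constant.

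The structural reason is that the equation at a $k$-space $Z\not\le U$ reads $\sum_W b_{Z+W}=0$, with the sum over the $k$-spaces $W\le U$ containing $Z\cap U$. It takes one entry from each of several fibres and never constrains a single fibre. Your fallback does not rescue this either. The relevant preimages overlap, since each $Y\not\le U$ contains $q^k$ points outside $U$. Moreover, the claim that each projection is homologous to $\alpha$ is exactly the identity you have not supplied.

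The missing idea is to bound sections rather than fibres. Fix a $1$-space $\ell\not\le U$ and let $\pi_\ell:V\to U$ be the projection with kernel $\ell$. Apply to $\partial\beta=\alpha$ the map $C_k(V)\to C_k(U)$ sending $[Z]$ to $[\pi_\ell(Z)]$ if $\ell\not\le Z$ and to $0$ otherwise; this map fixes $\alpha$. A $(k+1)$-space $Y\ni\ell$ has exactly $q^k$ hyperplanes avoiding $\ell$, all mapping onto $Y\cap U$. On $Y\not\ni\ell$ the projection is injective and commutes with $\partial$. Hence
\[
\alpha=q^k\sum_{W\in\Gr_k(U)}b_{W+\ell}[W]+\partial\Bigl(\sum_{Y\not\ni\ell}b_Y[\pi_\ell(Y)]\Bigr).
\]
So the section $\sum_W b_{W+\ell}[W]$ is homologous to $q^{-k}\alpha$, and the number of supported $Y$ through $\ell$ is at least $|\alpha|$. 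There are $q^{2k}$ choices of $\ell$, and each $Y\not\le U$ contains exactly $q^k$ of them, so double counting gives $q^{2k}|\alpha|\le q^k|\beta|$.

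Your $\rho(\beta')$ equals $q^{-k}$ times the sum of all these sections. Averaging them before invoking minimality is precisely what loses the factor $q^k$.
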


\begin{proof}
Fix a $1$-space $\ell\le V$ not contained in $U$, and let
$\pi_\ell:V\to U$ be the projection on $U$ with kernel $\ell$. Decompose
$\beta=\beta^{\ni\ell}+\beta^{\not\ni\ell}$, where the first summand
is supported on the $(k+1)$-spaces containing $\ell$ and the second
on those not containing $\ell$. Define
\[
 L_\ell[S]=[\pi_\ell(S)]\in C_k(U)
 \quad(\ell\le S),
 \qquad
 Q_\ell[S]=[\pi_\ell(S)]\in C_{k+1}(U)
 \quad(\ell\nleq S).
\]
Extend both maps linearly on the corresponding coordinate subspaces.
The key to the proof is that applying the projection to the $k$-faces of
$\partial\beta=\alpha$ that do not contain $\ell$ gives the identity:
\[
 \alpha
 =
 q^kL_\ell\bigl(\beta^{\ni\ell}\bigr)
 +
 \partial Q_\ell\bigl(\beta^{\not\ni\ell}\bigr).
\]
Indeed, a $(k+1)$-space containing $\ell$ has exactly $q^k$
hyperplanes not containing $\ell$, all projecting onto the same
$k$-space, while projection commutes with the boundary on spaces
disjoint from $\ell$.

Thus $L_\ell\bigl(\beta^{\ni\ell}\bigr)$ represents $q^{-k}$ times
the class represented by $\alpha$ in $H_k(U)$. Scaling preserves
support-minimality, and projection is injective on the $(k+1)$-spaces
containing $\ell$. Consequently, $|\beta^{\ni\ell}|\ge|\alpha|$.

There are $q^{2k}$ choices of the $1$-space $\ell$ outside $U$, and
each $(k+1)$-space outside $U$ contains $q^k$ such $1$-spaces.
Double-counting pairs $(\ell,W)$ with $\ell\nleq U$ and
$W\in\supp(\beta)$ containing $\ell$ gives
$q^{2k}|\alpha|\le q^k|\beta|$, proving the lower bound.

For the upper bound, Lemma~\ref{lem:incidence-commutator} in the
$(2k+1)$-space $V$ gives
$\partial d_k^V\alpha=q^k\alpha$. In the $2k$-space $U$, the
commutator scalar is $[k]_q-[k]_q=0$, so
$\partial d_k^U\alpha=0$. Hence
$q^{-k}(d_k^V-d_k^U)\alpha$ fills $\alpha$.
Its support consists precisely of the $(k+1)$-spaces outside $U$
that contain a member of $\supp(\alpha)$.
Each member of $\supp(\alpha)$ has exactly $q^k$ such cofaces, and
each such coface has a unique $k$-dimensional intersection with
$U$. Its support therefore has size $q^k|\alpha|$.
\end{proof}

\bibliographystyle{plain}
\bibliography{ref}


\end{document}